\documentclass[11pt]{article}

\usepackage[margin=1in]{geometry}
\usepackage[T1]{fontenc}
\usepackage[utf8]{inputenc}
\usepackage{lmodern}
\usepackage{microtype}
\usepackage{amsmath,amssymb,amsthm,mathtools}
\usepackage{booktabs,array}
\usepackage{graphicx}
\usepackage{enumitem}
\usepackage{algorithm}
\usepackage[noend]{algpseudocode}
\usepackage[hidelinks]{hyperref}
\usepackage{doi}
\usepackage{authblk}
\usepackage[backend=biber,citestyle=numeric-comp,bibstyle=ieee,sorting=none,minbibnames=5,maxbibnames=8,giveninits=true]{biblatex}
\newtheorem{theorem}{Theorem}[section]
\newtheorem{proposition}[theorem]{Proposition}
\newtheorem{lemma}[theorem]{Lemma}
\newtheorem{corollary}[theorem]{Corollary}

\theoremstyle{remark}
\newtheorem{remark}[theorem]{Remark}

\DeclareMathOperator{\perm}{perm}

\newcommand{\cS}{\mathcal S}

\newcommand{\defeq}{:=}

\title{Constructive recurrences for determinants and permanents of banded Toeplitz matrices}
\author[1]{Max A. Alekseyev}
\author[2]{Dmitry I. Khomovsky}
\affil[1]{\small The George Washington University, Washington, DC, USA. Email: \href{mailto:maxal@gwu.edu}{maxal@gwu.edu}}
\affil[2]{\small Email: \href{mailto:khomovskij@physics.msu.ru}{khomovskij@physics.msu.ru}}
\date{}

\begin{document}
\maketitle

\begin{abstract}
For fixed nonnegative integers $m_1,m_2$, let $A_n=(a_{j-i})_{i,j=1}^n$
be the leading $n\times n$ section of a Toeplitz matrix with lower and upper
semibandwidths $m_1$ and $m_2$.  We give two constructive Laplace-expansion
methods for scalar recurrences of $\det A_n$ and $\perm A_n$.  The
increasing-rows method eliminates a fixed family of boundary cofactors and
gives recurrence order at most $d=\binom{m_1+m_2}{m_1}$ for both
sequences.  The row-column method closes normalized boundary minors
recursively and packages them in a sparse transfer matrix.  Its reachable
states are classified exactly: level $j$ is indexed by a pair of $j$-subsets
of $[m_1]$ and $[m_2]$.  Hence the transfer dimension is $d$, and we obtain
an explicit formula for the number of nonzero transitions.

For determinants, the complementary cofactors of the increasing-rows
construction are coordinates of the classical compound companion
representation.  The independently constructed row-column transfer has the
Widom characteristic polynomial and is generically similar to the compound
transfer.  Thus the order $d$ recurrence is generically minimal for the
unrestricted fixed-band determinant family.  For permanents the same state
graph gives the binomial upper bound, without a general minimality claim.
The pentadiagonal case recovers Sweet's order-six determinant recurrence and
its permanent analogue, while the one-superdiagonal family gives closed
scalar recurrences and rational generating functions.  Position-dependent
band weights preserve the finite state graph but replace the constant
transfer by a cocycle.  For cyclic determinants, Fourier diagonalization
produces all subset products of the symbol roots and a generically minimal
annihilator of degree $2^{m_1+m_2}$, corresponding to the passage from one
exterior degree to the full exterior algebra.
\end{abstract}

\noindent\textbf{2020 Mathematics Subject Classification:} Primary 15B05; Secondary 15A15, 15A75.

\medskip
\noindent\textbf{Keywords:} banded Toeplitz matrix; determinant; permanent;
linear recurrence; transfer matrix; compound matrix

\section{Introduction}

Let
\[
 A_n=(a_{j-i})_{i,j=1}^n
\]
be the $n\times n$ Toeplitz matrix generated by the Laurent polynomial
\[
 a(z)=\sum_{s=-m_1}^{m_2}a_s z^s,
 \qquad a_{-m_1}a_{m_2}\ne0,
\]
where $m_1,m_2$ are fixed nonnegative integers.  We call $m_1$ and $m_2$
the lower and upper \emph{semibandwidths}; the total bandwidth is
$m_1+m_2+1$.  We write
\[
 D_n(m_1,m_2)\defeq\det A_n,
 \qquad
 P_n(m_1,m_2)\defeq\perm A_n,
\]
and suppress $(m_1,m_2)$ when no confusion can arise.

Finite recurrence structure already appears in the pentadiagonal case
$(m_1,m_2)=(2,2)$.  Sweet \cite{Sweet} proved the order-six relation
\begin{align}
D_{n+6}={}&a_0D_{n+5}-(a_{-1}a_1-a_{-2}a_2)D_{n+4}
 +(a_{-2}a_1^2+a_{-1}^2a_2-2a_{-2}a_0a_2)D_{n+3} \notag\\
&-a_{-2}a_2(a_{-1}a_1-a_{-2}a_2)D_{n+2}
 +a_0(a_{-2}a_2)^2D_{n+1}-(a_{-2}a_2)^3D_n, \label{eq:sweet}
\end{align}
Closed Chebyshev forms for general five-diagonal Toeplitz determinants
were obtained by Marr and Vineyard \cite{MarrVineyard1988}, while Hadj and
Elouafi \cite{HadjElouafi2008} treated characteristic polynomials,
eigenvectors, and determinants for pentadiagonal matrices.  Related
recurrence and determinant formulas have appeared in several later forms;
see, for example, \cite{Cinkir,Jia,An,DuFonsecaPereira2022}.  The existence
of finite recurrences in arbitrary fixed bandwidth is classical.  Widom's
formula \cite{Widom1958} expresses a banded Toeplitz determinant as a finite
sum of exponential terms whose bases are fixed-cardinality products of the
roots of the symbol.  The natural number of such characteristic factors is
\[
 d(m_1,m_2)\defeq\binom{m_1+m_2}{m_1}.
\]
The finite-section determinant theory sits inside the broader spectral theory
of banded Toeplitz matrices; a convenient modern reference is
B\"ottcher and Grudsky \cite{BottcherGrudsky2005}, whose treatment includes a
chapter devoted to finite Toeplitz determinants.  Complementary matrix-
theoretic treatments include Bini and Capovani \cite{BiniCapovani1983},
Trench \cite{Trench1985}, and Tismenetsky \cite{Tismenetsky1987}.
Alexandersson \cite{Alexandersson2012} later placed Widom's formula and
recurrences for Toeplitz minors in the language of Schur polynomials.

There is also a substantial recurrence literature for permanents.  Minc's
early work on $(0,1)$-circulants \cite{Minc1964} was followed by explicit
recurrence constructions \cite{Minc1985} and by a permanental-compound
interpretation \cite{Minc1987}.  Shevelev's recurrence program treats both
circulant and Toeplitz permanents and determinants
\cite{SheFund1990,She,SheSurvey1992,She1}; on the determinant side his
algomatrix is an associated (compound) matrix of companion type.  Golin,
Leung, and Wang \cite{GolinLeungWang2006} give a combinatorial transfer-
matrix derivation of fixed-jump circulant recurrences by encoding partial
cycle covers with finite two-sided boundary data.  Sparse Toeplitz and
circulant permanents have also been studied from several complementary
viewpoints \cite{Cod,Schwartz2009,Koch,Kamenetskii2007,CodenottiResta2002}.
Zakraj\v{s}ek and Petkov\v{s}ek \cite{Zakr} prove recurrence results for
principal minors of arbitrary banded matrices with varying entries.

Our contribution is a constructive description of the recurrence mechanism
and an exact comparison of several natural finite-dimensional realizations.  We develop two Laplace-expansion
constructions.  The first, which we call the \emph{increasing-rows method},
expands a short family of consecutive sections against a common collection
of boundary cofactors and eliminates those cofactors.  It gives the binomial
upper bound $d(m_1,m_2)$ simultaneously for determinants and permanents.
The second, the \emph{row-column method}, recursively expands exposed
boundary rows and columns, normalizes the resulting minors by translation,
and records the closed family as a sparse transfer matrix.  The same
signature graph supports determinants and permanents; only the Laplace signs
change.

The row-column construction exposes more structure than a bare finiteness
proof.  We classify every reachable normalized signature: level $j$ is
indexed by a pair of $j$-subsets of $[m_1]$ and $[m_2]$.  Vandermonde's
identity then gives the exact total state count
\[
 \sum_j\binom{m_1}{j}\binom{m_2}{j}
 =\binom{m_1+m_2}{m_1},
\]
and the same classification yields a closed sparsity formula for the
transition matrix.  In the pentadiagonal case this produces a six-state
system whose characteristic polynomials give Sweet's determinant recurrence
and the corresponding permanent recurrence.  The balanced $(3,3)$ transfer,
with $20$ states and $50$ nonzero entries, is displayed in the appendix as a
larger concrete example.

For determinants we can compare the two constructions with the classical
root-product picture exactly.  The complementary cofactors generated by the
increasing-rows method are coordinates of the fixed exterior power of a
companion matrix.  The independently constructed row-column transfer has the
same characteristic polynomial as the Widom annihilator and is generically
similar to the compound transfer.  In this sense the sparse boundary-state
graph, Shevelev's associated-matrix construction, and Widom's products of
symbol roots are three coordinate descriptions of the same finite-dimensional
object.  This comparison also proves that the binomial recurrence order is
generically minimal for the unrestricted determinant family.  We make no
parallel generic-minimality assertion for permanents: the constructive
binomial upper bound is general, while minimal permanent recurrences may
collapse under additional structure.

A recent preprint of Hendel \cite{Hendel2026} provides another systematic
Laplace-expansion procedure for banded square Toeplitz determinant families.
Its stages are organized by successive deletion of leading rows and boundary
column sets, whereas our increasing-rows method eliminates a fixed family of
complementary cofactors and the row-column method uses exchanged boundary
subsets.  In the balanced $(m,m)$ case the total number of pre-closing Hendel
patterns is nevertheless the same central binomial coefficient.  This
agreement is useful context for the binomial scale, but the state
stratifications and resulting transfer descriptions are different.  We compare
these realizations side by side in Section~\ref{sec:laplace-comparison}.

Two later sections separate two genuinely different ways of leaving the
autonomous open-Toeplitz setting.  Changing the coefficients while keeping
open boundaries preserves the finite local state architecture: for leading
principal sections of a fixed position-dependent banded matrix, the signature
graph survives unchanged but the constant transfer becomes a cocycle.  Fixed
corner defects belong to the same open-boundary axis and suggest bounded
two-sided boundary data.  Cyclic closure changes something different: the
bulk symbol stays constant while the two boundaries are identified.  Fourier
diagonalization then replaces Widom's single fixed-cardinality layer by all
subsets of the symbol roots, giving a generically minimal determinant
recurrence of order $2^{m_1+m_2}$.  Classical cycle-cover transfers provide
the corresponding finite-state framework for cyclic permanents.

The one-superdiagonal Toeplitz--Hessenberg family provides a benchmark where
the scalar recurrence and generating function can be written directly.  We
return to one sparse $(2,1)$ specialization in
Section~\ref{sec:sparse-spectrum}: after cubic rescaling, its characteristic
polynomials are the threefold-symmetric $2$-orthogonal Tchebychev family of
Douak and Maroni \cite{DouakMaroni1997I}.  For positive double-band weights,
McMillen's general theorem \cite{McMillen2009} already gives the resulting
threefold root-of-unity geometry.  The zero-location results
\cite{BenRomdhane2008,LoureiroVanAssche2020} refine that geometry here by
identifying the positive radial representatives and their strict
interlacing; the known limiting zero distribution \cite{ShapiroStampach2019}
also identifies the sharp outer radius.

The paper is organized as follows.  Section~\ref{sec:prelim} records the
classical root-product and Toeplitz--Hessenberg recurrence benchmarks.
Section~\ref{sec:increasing} develops the increasing-rows construction and
identifies its complementary cofactors with the compound companion state.
Section~\ref{sec:rowcolumn} gives the row-column transfer, proves the exact
signature classification and sparsity count, compares the determinant
transfer with the compound/Widom realization, and places these constructions
alongside Hendel's staged Laplace closure.  Section~\ref{sec:sparse-spectrum}
extracts spectral consequences from a sparse Toeplitz--Hessenberg
specialization.  Section~\ref{sec:variable} treats position-dependent open
bands and finite corner defects.  Section~\ref{sec:cyclic} treats cyclic
closure separately, emphasizing the passage from one exterior degree to the
full exterior algebra.  Section~\ref{sec:conclusion} summarizes the resulting
recurrence picture and open directions.

\section{Classical recurrence structure and benchmarks}\label{sec:prelim}

For a square matrix $B$, let $B[R,C]$ denote the submatrix with row set $R$
and column set $C$.  We use the usual signed cofactors for determinants.  For
permanents it is convenient to use \emph{signless cofactors}: deleting the
same row and column sets as in a Laplace expansion, but without the
alternating sign.

\subsection{Tridiagonal prototype}\label{sec:tridiag}

The simplest instance already shows two different spectral viewpoints that
will recur below.  Let
\[
 T_n(P,Q)=
 \begin{bmatrix}
 P&Q&&&\\
 1&P&Q&&\\
 &\ddots&\ddots&\ddots&\\
 &&1&P&Q\\
 &&&1&P
 \end{bmatrix},
 \qquad \Delta_n\defeq\det\!\bigl(T_n(P,Q)\bigr),
\]
and put $\Delta_0=1$.  Laplace expansion gives
\begin{equation}\label{eq:tridiagrec}
 \Delta_n=P\Delta_{n-1}-Q\Delta_{n-2},
 \qquad \Delta_0=1,\quad \Delta_1=P.
\end{equation}
The classical spectrum of a tridiagonal Toeplitz matrix
\cite{NoschesePasquiniReichel2013} gives the eigenvalues
\[
 P+2\sqrt Q\cos\frac{\pi j}{n+1},\qquad 1\le j\le n,
\]
and therefore
\begin{equation}\label{eq:tridiagproduct}
 \Delta_n=\prod_{j=1}^{n}
 \left(P+2\sqrt Q\cos\frac{\pi j}{n+1}\right).
\end{equation}
It is useful to record this recurrence in the standard Lucas-polynomial
notation
\[
 U_0(P,Q)\defeq0,\qquad U_1(P,Q)\defeq1,\qquad
 U_n(P,Q)\defeq P U_{n-1}(P,Q)-Q U_{n-2}(P,Q),
\]
and
\[
 V_0(P,Q)\defeq2,\qquad V_1(P,Q)\defeq P,\qquad
 V_n(P,Q)\defeq P V_{n-1}(P,Q)-Q V_{n-2}(P,Q).
\]
Then
\begin{equation}\label{eq:tridiag-Lucas}
 \Delta_n=U_{n+1}(P,Q).
\end{equation}
More generally, if $W_0=A$, $W_1=B$, and
\[
 W_k=PW_{k-1}-QW_{k-2},
\]
then
\begin{equation}\label{eq:W-Lucas}
 W_k=B\,U_k(P,Q)-AQ\,U_{k-1}(P,Q)\qquad(k\ge1).
\end{equation}
Combining \eqref{eq:W-Lucas} with the finite Toeplitz spectrum gives, for
$k\ge2$,
\begin{equation}\label{eq:Wproduct}
\begin{split}
 W_k={}&B\prod_{j=1}^{k-1}
 \left(P+2\sqrt Q\cos\frac{\pi j}{k}\right)\\
 &-AQ\prod_{j=1}^{k-2}
 \left(P+2\sqrt Q\cos\frac{\pi j}{k-1}\right).
\end{split}
\end{equation}
Thus the product formula is simply the Lucas $U$-solution written through
the spectra of two consecutive tridiagonal Toeplitz sections.  In the usual
Chebyshev notation $\mathsf U_n$,
\[
 U_{n+1}(P,Q)=Q^{n/2}\mathsf U_n\!\left(\frac{P}{2\sqrt Q}\right).
\]

The $n$ eigenvalues in \eqref{eq:tridiagproduct} vary with the matrix size.
By contrast, the two fixed characteristic roots of $t^2-Pt+Q$ govern the
sequence $(\Delta_n)$ as $n$ varies.  Widom's factors below are the
higher-band analogues of the latter.

\subsection{Balance condition on determinant monomials}

\begin{lemma}\label{lem:balance}
If $\prod_{s=-m_1}^{m_2}a_s^{n_s}$ occurs as a monomial of
$D_n(m_1,m_2)$, then
\[
 \sum_{s=-m_1}^{m_2}s\,n_s=0.
\]
\end{lemma}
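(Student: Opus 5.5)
The plan is to read the monomials directly off the Leibniz expansion
\[
 D_n(m_1,m_2)=\sum_{\sigma\in\cS_n}\operatorname{sgn}(\sigma)\prod_{i=1}^n a_{\sigma(i)-i},
\]
with the convention $a_s=0$ for $s\notin[-m_1,m_2]$. Each permutation $\sigma$ contributes either $0$ or $\pm\prod_s a_s^{n_s(\sigma)}$, where $n_s(\sigma)=\#\{i:\sigma(i)-i=s\}$. So we need two things. First, every such product satisfies the balance condition. Second, no monomial appears in $D_n$ that does not arise from some single permutation.

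For the first point the computation is immediate:
\[
 \sum_{s}s\,n_s(\sigma)=\sum_{i=1}^n(\sigma(i)-i)=\sum_{i=1}^n\sigma(i)-\sum_{i=1}^n i=0,
\]
because $\sigma$ is a bijection of $[n]$.

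For the second point, we regard $D_n$ as a polynomial in the independent indeterminates $a_{-m_1},\dots,a_{m_2}$. Collecting terms can only cancel or merge coefficients of monomials that already occur in the expansion; it cannot create a new monomial. Hence any monomial with nonzero coefficient equals $\prod_s a_s^{n_s(\sigma)}$ for at least one $\sigma$ with all displacements $\sigma(i)-i$ inside the band, and the identity above applies to that $\sigma$.

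The only point needing care is this interpretation of ``occurs as a monomial'': it must refer to the polynomial in formal variables $a_s$, not to numerical values. Once that is fixed, the argument is routine.

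A slicker alternative would be a scaling argument. Replacing $a_s$ by $t^s a_s$ conjugates $A_n$ by $\operatorname{diag}(t,t^2,\dots,t^n)$, which leaves the determinant unchanged. Therefore $D_n$ is homogeneous of weighted degree $0$ under this grading, which is exactly the claimed identity.

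The same argument applies verbatim to $P_n$, since the permanent has the same Leibniz expansion without the signs.
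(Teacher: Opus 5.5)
Your argument is correct and is essentially the paper's proof: each monomial of $D_n$ arises from a single permutation $\pi$ in the Leibniz expansion, and $\sum_s s\,n_s=\sum_{i=1}^n(\pi(i)-i)=0$. Your extra remarks are valid but go beyond what the paper records. These are the observation that collecting terms cannot create new monomials, and the diagonal-similarity scaling $a_s\mapsto t^s a_s$ as an alternative proof.
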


\begin{proof}
Every monomial in the determinant is obtained from a permutation $\pi\in
S_n$ and has the form $\prod_{i=1}^n a_{\pi(i)-i}$.  Hence
\[
 \sum_s s n_s=\sum_{i=1}^n(\pi(i)-i)=0.
\]
\end{proof}

\subsection{Widom's characteristic factors}

Put
\[
 q(z)\defeq z^{m_1}a(z)
 =a_{-m_1}+a_{-m_1+1}z+\cdots+a_{m_2}z^{m_1+m_2}.
\]
Suppose first that the roots $z_1,\ldots,z_{m_1+m_2}$ of $q$ are pairwise
distinct.  In what follows, $J$ ranges over the $m_2$-element subsets of
$\{1,\ldots,m_1+m_2\}$.  Widom's formula \cite{Widom1958}; see also
\cite{Alexandersson2012}, writes $D_n$ in the form
\begin{equation}\label{eq:widomform}
 D_n=\sum_{|J|=m_2} C_J W_J^n,
 \qquad
 W_J=(-1)^{m_2}a_{m_2}\prod_{j\in J}z_j,
\end{equation}
with explicit nonzero rational functions $C_J$ of the roots in the generic
case.  Consequently,
\begin{equation}\label{eq:widompoly}
 \chi_{m_1,m_2}(t)
 \defeq\prod_{|J|=m_2}(t-W_J)
\end{equation}
is an annihilating polynomial for $(D_n)$ of degree $d(m_1,m_2)$.  The
coefficients of $\chi_{m_1,m_2}$ are symmetric polynomials in the roots and
hence polynomial expressions in the diagonal parameters, so the recurrence
extends by specialization when roots coalesce.

\begin{remark}\label{rem:minimal}
Throughout this paper, an \emph{order-$d$ recurrence} is a homogeneous
annihilating relation involving $d+1$ consecutive terms; equivalently, its
monic annihilating polynomial has degree $d$.  The minimal recurrence may
have smaller order.  For fixed $(m_1,m_2)$, let
\[
 \mathcal P_{m_1,m_2}
 \defeq
 \left\{
 (a_{-m_1},\ldots,a_{m_2})\in\mathbb C^{m_1+m_2+1}:
 a_{-m_1}a_{m_2}\ne0
 \right\}.
\]
By the \emph{generic minimal order} for the unrestricted determinant family
we mean the minimal recurrence order on a nonempty Zariski-open subset of
$\mathcal P_{m_1,m_2}$.  Theorem~\ref{thm:rowcolumn-compound} proves that
this order is $\binom{m_1+m_2}{m_1}$.  Special parameter choices can
identify characteristic modes and lower the minimal order.  For permanents,
Theorem~\ref{thm:increasing} gives the same binomial number as a general
upper bound, but no general generic-minimality assertion is made.
\end{remark}

\subsection{One-superdiagonal Toeplitz--Hessenberg case}\label{sec:hessenberg}

When $m_2=1$, the general binomial order becomes $m_1+1$ and the scalar
recurrence is especially transparent.  Write $m=m_1$, put $D_0=P_0=1$, and
for the recurrence notation set
\[
 D_j=P_j=0\qquad(-m\le j<0).
\]
Only these first $m$ negative indices are assigned zero values.

\begin{proposition}\label{prop:hessenberg}
For an $(m,1)$-banded Toeplitz matrix and $n\ge1$,
\begin{align}
 D_n&=\sum_{r=0}^{m}(-1)^r a_{-r}a_1^rD_{n-r-1},
 \label{eq:hessenberg-rec}\\
 P_n&=\sum_{r=0}^{m}a_{-r}a_1^rP_{n-r-1}.
 \label{eq:hessenberg-perm-rec}
\end{align}
Moreover,
\begin{align}
 \sum_{n\ge0}D_nx^n
 &=\left(1-\sum_{r=0}^{m}(-1)^r a_{-r}a_1^r x^{r+1}\right)^{-1},
 \label{eq:hessenberg-gf}\\
 \sum_{n\ge0}P_nx^n
 &=\left(1-\sum_{r=0}^{m}a_{-r}a_1^r x^{r+1}\right)^{-1}.
 \label{eq:hessenberg-perm-gf}
\end{align}
\end{proposition}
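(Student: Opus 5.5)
The plan is to expand $\det A_n$ along its first column, which for an $(m,1)$-band is a lower Hessenberg matrix. The only nonzero entries of the first column are $a_{-r}$ in row $r+1$, for $0\le r\le \min(m,n-1)$. The first step is to determine the minor obtained by deleting row $r+1$ and column $1$.

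In the remaining matrix the rows $1,\dots,r$ lie above the deleted row and the columns have been shifted left by one. That minor is block lower triangular:
\begin{itemize}
\item the upper-left $r\times r$ block has rows $1,\dots,r$ and original columns $2,\dots,r+1$; its diagonal entries are all $a_1$ and it is lower triangular, since entries above would need $j-i\ge 2$;
\item the block to the right of it, with rows $1,\dots,r$ and columns $r+2,\dots,n$, vanishes because there $j-i\ge 2$;
\item the lower-right block, with rows and columns $r+2,\dots,n$, is a copy of $A_{n-r-1}$ by Toeplitz translation invariance.
\end{itemize}
Hence the minor equals $a_1^r D_{n-r-1}$, and the same holds for the permanent with $P_{n-r-1}$, since the permanent is also multiplicative on block triangular matrices. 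The Laplace sign is $(-1)^{(r+1)+1}=(-1)^r$, and it is absent for permanents. This gives \eqref{eq:hessenberg-rec} and \eqref{eq:hessenberg-perm-rec} for all terms with $r\le n-1$.

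Next, the terms with $n-r-1<0$ must be accounted for. When $r\ge n$ we have $n-r-1\in[-m,-1]$, which is exactly the range where the convention $D_j=P_j=0$ applies. So the missing terms contribute zero and the identity holds for every $n\ge1$. The case $r=n-1$ uses $D_0=P_0=1$, corresponding to the empty lower-right block.

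For the generating functions, set $F(x)=\sum_{n\ge0}D_nx^n$ and $c_r=(-1)^ra_{-r}a_1^r$. Multiply the recurrence by $x^n$ and sum over $n\ge1$. The vanishing initial conventions turn the right side into $\sum_r c_r x^{r+1}F(x)$, while the left side is $F(x)-1$. Solving gives \eqref{eq:hessenberg-gf}, and the same computation without signs gives \eqref{eq:hessenberg-perm-gf}.

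The main point requiring care is the block-triangular identification of the minor: checking that the upper-right block is zero and that the lower-right block is exactly $A_{n-r-1}$, not a shifted band. Both follow from the single band condition $a_s=0$ for $s\ge2$.
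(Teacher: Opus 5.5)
Your argument is correct and is essentially the paper's proof: the paper expands along the last row rather than the first column. That is the mirror-image computation under the index flip $(i,j)\mapsto(n+1-j,\,n+1-i)$, which fixes every Toeplitz matrix, so the chain of $r$ entries $a_1$, the sign $(-1)^r$, the zero convention for the terms with $r\ge n$, and the generating-function step all coincide. The only cosmetic difference is that in the paper the leftover block is literally the leading section $A_{n-r-1}$, whereas in yours it is the trailing block, which you correctly identify with $A_{n-r-1}$ by translation invariance.
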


\begin{proof}
Expanding the last row, the term containing $a_{-r}$ can occur only for
$r\le n-1$; it forces, through the single available superdiagonal, a chain
of $r$ entries $a_1$ before leaving a leading principal section of order
$n-r-1$.  The determinant sign is $(-1)^r$.  Appending the terms with
$r\ge n$ does not change the sum because then $-m\le n-r-1<0$.  The
permanent expansion is the signless version.  Multiplying by $x^n$ and
summing gives the generating functions.
\end{proof}

\begin{remark}[Negative-index continuation]\label{rem:hessenberg-negative}
The restricted convention $D_{-1}=\cdots=D_{-m}=0$ (and likewise for
$P_n$) agrees with the genuine bilateral continuation of the order-$m+1$
recurrence.  The continuation does not remain zero:
\[
 D_{-(m+1)}=\frac{(-1)^m}{a_{-m}a_1^m},
 \qquad
 P_{-(m+1)}=\frac{1}{a_{-m}a_1^m}.
\]
\end{remark}

After diagonal similarity normalizes the superdiagonal $a_1$ to $1$, this
is a Toeplitz specialization of the Hessenberg generating-function method of
Getu \cite{Getu1991}.  In Section~\ref{sec:states} we recover the same
recurrence directly from the row-column state graph.

The case $m=1$ makes the Lucas connection especially explicit:
\begin{equation}\label{eq:tridiag-det-perm-Lucas}
 D_n=U_{n+1}(a_0,a_{-1}a_1),\qquad
 P_n=U_{n+1}(a_0,-a_{-1}a_1).
\end{equation}
In particular, in the one-superdiagonal family the determinant and permanent
recurrences are interchanged by the elementary substitution $a_1\mapsto-a_1$.

\section{Increasing-rows method}\label{sec:increasing}

We first give a direct elimination construction.  Its main value is
conceptual: it exposes the binomial number of boundary cofactors without
requiring the recursive state search used later in the row-column method.
We assume $m_1\ge m_2$; otherwise we transpose the matrices.

Let
\[
 d\defeq\binom{m_1+m_2}{m_1}
\]
and let $\cS$ be the collection of strictly decreasing $m_2$-tuples
\[
 S=(s_1,\ldots,s_{m_2}),
 \qquad
 2m_1-1\ge s_1>\cdots>s_{m_2}\ge m_1-m_2.
\]
Thus $|\cS|=d$.

For $\ell\ge0$ define
\[
 R_\ell=\{k+1-m_1,\ldots,k+\ell\}
\]
and
\[
 C_{\ell,S}
 =\{k-s_1,\ldots,k-s_{m_2}\}
 \cup\{k+1+m_2-m_1,\ldots,k+\ell\}.
\]
Both have cardinality $m_1+\ell$.  Let
\[
 B_{\ell,S}^{\det}\defeq\det A_{k+\ell}[R_\ell,C_{\ell,S}],
 \qquad
 B_{\ell,S}^{\perm}\defeq\perm A_{k+\ell}[R_\ell,C_{\ell,S}].
\]
Toeplitz translation invariance makes these quantities independent of $k$.

\begin{theorem}\label{thm:increasing}
For fixed $m_1,m_2$, both $(D_k(m_1,m_2))$ and $(P_k(m_1,m_2))$ satisfy a
homogeneous linear recurrence of order at most
$\binom{m_1+m_2}{m_1}$.
\end{theorem}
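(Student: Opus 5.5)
The plan is to apply a generalized Laplace expansion to $A_{k+\ell}$ along its last $m_1+\ell$ rows, namely the row set $R_\ell$, for each $\ell=0,1,\ldots,d$. The aim is to write $D_{k+\ell}$ (respectively $P_{k+\ell}$) as $\sum_{S\in\cS}\varepsilon_{\ell,S}\,B_{\ell,S}\,E_{k,S}$. Here the $B_{\ell,S}$ are the $k$-independent boundary minors defined above, and $E_{k,S}$ is a complementary minor that depends on $k$ and $S$ but \emph{not} on $\ell$. Once this is established, the $d+1$ consecutive terms $D_k,\ldots,D_{k+d}$ are images of the single vector $(E_{k,S})_{S\in\cS}\in\mathbb C^d$ under a fixed $(d+1)\times d$ matrix $M=(\varepsilon_{\ell,S}B_{\ell,S})$. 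Linear algebra then produces a nonzero $c\in\mathbb C^{d+1}$ with $c^{\mathsf T}M=0$, and hence $\sum_{\ell=0}^d c_\ell D_{k+\ell}=0$ for all admissible $k$. If $c_0$ or $c_d$ vanishes, the relation is simply of smaller order, which is still within the bound.

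\emph{Step 1: which column sets survive.} In the Laplace expansion along $R_\ell$, the complementary factor uses rows $1,\ldots,k-m_1$. These rows have nonzero entries only in columns $\le k-m_1+m_2$. So if the complementary column set is to give a nonzero minor, it must lie in $[1,k-m_1+m_2]$. Equivalently, the chosen column set $C$ must contain every column $k+1+m_2-m_1,\ldots,k+\ell$; there are $\ell+m_1-m_2$ of these. The remaining $m_2$ columns of $C$ lie in $[1,k-m_1+m_2]$.

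Symmetrically, the rows of $R_\ell$ are all $\ge k+1-m_1$, so their nonzero entries lie in columns $\ge k+1-2m_1$. Hence those $m_2$ extra columns have the form $k-s_i$ with $m_1-m_2\le s_i\le 2m_1-1$. (Here $m_1\ge m_2$ is used to make the ranges consistent.) Consequently the only possibly nonzero terms are indexed exactly by $C=C_{\ell,S}$ with $S\in\cS$.

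The complementary minor has row set $[1,k-m_1]$ and column set $[1,k-m_1+m_2]\setminus\{k-s_1,\ldots,k-s_{m_2}\}$. Neither set involves $\ell$, so this minor can be taken as $E_{k,S}$.

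\emph{Step 2: signs.} For permanents there are no signs. For determinants the Laplace sign is $(-1)^{\Sigma R_\ell+\Sigma C_{\ell,S}}$. Replacing $k$ by $k+1$ adds $m_1+\ell$ to each of the two sums, so the parity is independent of $k$. The sign is therefore a constant $\varepsilon_{\ell,S}$ and can be absorbed into $M$.

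\emph{Step 3: small $k$.} For small $k$, some indices $k-s_i$ are $<1$, or $k<m_1$. I would handle this by declaring $E_{k,S}=0$ whenever its column set is not contained in $[1,k-m_1+m_2]$ or is otherwise undefined; the expansion identity then remains valid for all $k\ge m_1$. This already yields the recurrence for all sufficiently large indices, which is what ``satisfies a recurrence'' requires. The same nullspace vector $c$ works for determinants and permanents separately, each with its own matrix $M$.

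I expect the main obstacle to be the bookkeeping in Step 1. Specifically, one has to verify that the band constraints cut the Laplace sum down to precisely the $d$ sets $C_{\ell,S}$, uniformly in $\ell$, so that the complementary minors genuinely do not depend on $\ell$. That uniformity is the entire content of the elimination argument.
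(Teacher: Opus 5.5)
Your proposal is correct and follows the paper's proof. Both expand $A_{k+\ell}$ along its last $m_1+\ell$ rows, use bandedness to cut the column sets down to the $d$ patterns $C_{\ell,S}$ whose complementary cofactors do not depend on $\ell$, and take a nonzero left-kernel vector of the $(d+1)\times d$ coefficient matrix. Your explicit checks of the band constraints, of the $k$-independence of the Laplace signs, and of small $k$ fill in details the paper leaves implicit; the paper instead folds the sign into $X_S(k)$, and that sign is also $\ell$-independent. One wording fix: $c$ is found separately for the determinant and permanent coefficient matrices and in general differs between them, so what carries over is the construction, not the vector.
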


\begin{proof}
We treat determinants.  Expand $D_{k+\ell}$ along the last
$m_1+\ell$ rows.  Bandedness forces every nonzero complementary cofactor
to be indexed by one of the sets $S\in\cS$.  Hence, for
$\ell=0,1,\ldots,d$,
\begin{equation}\label{eq:incsystem}
 D_{k+\ell}=\sum_{S\in\cS}B_{\ell,S}^{\det}\,X_S(k),
\end{equation}
where $X_S(k)$ is the corresponding signed complementary cofactor.  The key
point is that $X_S(k)$ does not depend on $\ell$.  There are $d+1$
equations in the $d$ unknown boundary cofactors $X_S(k)$, so the coefficient
matrix has a nonzero left-kernel vector and hence yields a recurrence.  The
same argument with signless Laplace expansion proves the permanent statement.
\end{proof}

\begin{algorithm}[H]
\caption{Increasing-rows recurrence construction}\label{alg:increasing}
\begin{algorithmic}[1]
\Require $m_1\ge m_2\ge0$ and diagonal values $a_{-m_1},\ldots,a_{m_2}$
\Ensure An annihilating recurrence for $D_k$ or $P_k$
\State $d\gets\binom{m_1+m_2}{m_1}$
\State Construct the $d$ boundary states $\cS$
\For{$\ell=0,1,\ldots,d$}
  \For{$S\in\cS$}
    \State Compute $B_{\ell,S}^{\det}$ (or $B_{\ell,S}^{\perm}$)
  \EndFor
\EndFor
\State Find a nonzero vector $c=(c_0,\ldots,c_d)$ in the left kernel of $B$
\State \Return $\sum_{\ell=0}^d c_\ell X_{k+\ell}=0$, where $X=D$ or $X=P$
\end{algorithmic}
\end{algorithm}

\begin{remark}[Keeping the coefficient minors small]\label{rem:inc-centered}
The forward range $\ell=0,1,\ldots,d$ is the simplest one for the proof, but
one may instead use
\[
 -m_1\le \ell\le d-m_1,
\]
which keeps the nonzero coefficient minors at orders $0,1,\ldots,d$.  For
negative $\ell$, the column set is interpreted through the fixed
complementary cofactor.  Fix $k$ and put
\[
 H_S(k)=\{k-s_1,\ldots,k-s_{m_2}\},\qquad
 J_S(k)=\{1,\ldots,k+m_2-m_1\}\setminus H_S(k).
\]
If $-m_1\le\ell<0$, set the coefficient to zero unless
$J_S(k)\subseteq\{1,\ldots,k+\ell\}$; otherwise use
\[
 R_\ell=\{k+1-m_1,\ldots,k+\ell\},\qquad
 C_{\ell,S}(k)=\{1,\ldots,k+\ell\}\setminus J_S(k).
\]
For example, when $(m_1,m_2)=(2,1)$ one may use
$\ell=-2,-1,0,1$, so only minors of orders at most $3$ occur.
\end{remark}

Algorithm~\ref{alg:increasing} is easy to state but becomes expensive because
its entries are themselves determinants or permanents of increasingly large
symbolic minors.  The row-column method avoids this growth by recursively
closing a finite family of boundary minors.

\subsection{Complementary cofactors as a compound state}\label{sec:inc-compound}

The exterior-power mechanism itself is classical.  In the determinant part
of Shevelev's algomatrix construction, the relevant algomatrix is identified
with an associated (compound) matrix of a companion-type matrix, and its
eigenvalues are the corresponding fixed-cardinality products of the roots
\cite{She}.  Shevelev's indexing uses the exterior degree complementary to
the convention adopted below; the two descriptions have the same binomial
dimension and are related by the usual complementary-compound duality.  What
we need here is a direct identification of the particular complementary
cofactors produced by Algorithm~\ref{alg:increasing} with this classical
compound state.  Put
\[
 d_0=m_1+m_2,\qquad p=m_2,
\]
and for a $p$-subset $I\subseteq[d_0]$ define
\begin{equation}\label{eq:DeltaI}
 \Delta_I(n)\defeq
 \det A_{n+p}\!\left[
   \{1,\ldots,n\},
   \{1,\ldots,n+p\}\setminus
   \{n-m_1+i:i\in I\}
 \right].
\end{equation}
For the base subset
$B_0=\{m_1+1,\ldots,d_0\}$ we have $\Delta_{B_0}(n)=D_n$.
Let
\[
 Y_I(n)\defeq
 (-1)^{\sum_{i\in I}i-\sum_{i\in B_0}i}\Delta_I(n).
\]
Let $C_q$ be the Frobenius companion matrix of the monic polynomial
$q(z)/a_{m_2}$, in the convention
$C_qe_i=e_{i+1}$ for $1\le i<d_0$.

\begin{theorem}[Cofactor realization of the compound transfer]\label{thm:compound-transfer}
With the coordinates ordered by the $p$-subsets of $[d_0]$,
\begin{equation}\label{eq:compound-transfer}
 Y(n+1)=T\,Y(n),\qquad
 T=(-1)^p a_{m_2}\,\bigwedge^p C_q.
\end{equation}
Moreover, the complementary cofactors $X_S(k)$ in the proof of
Theorem~\ref{thm:increasing} are, up to their standard cofactor signs and a
uniform shift of $n$, exactly the coordinates $\Delta_I(n)$.
\end{theorem}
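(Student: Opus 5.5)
The plan is to prove \eqref{eq:compound-transfer} by a single Laplace expansion along the newly added row, and then to compare the resulting coefficients with those of $\bigwedge^p C_q$ entry by entry.

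First, the geometry. In $A_{n+p}[\{1,\ldots,n\},\cdot\,]$ every row $r$ is supported on columns $r-m_1,\ldots,r+m_2$. Hence all columns $\le n-m_1$ occur in every minor $\Delta_I(n)$. The remaining columns $n-m_1+1,\ldots,n+p$ form a window of exactly $d_0$ columns, since $n-m_1+d_0=n+m_2=n+p$. A $p$-subset $I\subseteq[d_0]$ records which window columns are deleted, so $\Delta_I(n)$ is a maximal ($n\times n$) minor of an $n\times(n+p)$ banded strip. Passing from $n$ to $n+1$ does two things: it appends row $n+1$, and it shifts the window right by one, so that new-window position $i$ is old-window position $i+1$.

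Second, the expansion. I would expand $\Delta_I(n+1)$ along its last row $n+1$. That row is supported on columns $n+1-m_1+j$ for $j\in[d_0]$, with entry $a_{j-m_1}$. A term survives only if the chosen column is still present, that is, $j\notin I$. After removing row $n+1$ and that column, we are left with rows $1,\ldots,n$ and all columns except $\{n+1-m_1+i: i\in I\cup\{j\}\}$.

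For this to be a minor of $A_{n+p}$, the column $n+1+p$, which is new-window position $d_0$, must be excluded. This forces $d_0\in I\cup\{j\}$. When the condition holds, the leftover minor is $\Delta_{I'}(n)$ with
\[
 I'=\{\,i+1 : i\in I\cup\{j\}\setminus\{d_0\}\,\}.
\]
If $d_0\in I$, all $j\notin I$ contribute. If $d_0\notin I$, only $j=d_0$ contributes, with coefficient $a_{m_2}$, and the step is a pure shift. This gives an explicit transfer: a shift of indices when $d_0\notin I$, and one row of weights $a_{j-m_1}$ when $d_0\in I$. That is precisely the pattern of $\bigwedge^p C_q$, where $C_q$ shifts $e_i\mapsto e_{i+1}$ and sends $e_{d_0}$ to $-\sum_k (a_{k-1-m_1}/a_{m_2})\,e_k$.

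Third, the identification with $\bigwedge^p C_q$, which I expect to be the main obstacle. I would compute $(\bigwedge^p C_q)(e_{i_1}\wedge\cdots\wedge e_{i_p})$ in both cases. In the case $d_0\in I$, reordering the wedge after inserting $e_k$ produces a sign. Multiplying through by $(-1)^p a_{m_2}$ must then convert the coefficients $-a_{k-1-m_1}/a_{m_2}$ into the Laplace coefficients $a_{j-m_1}$, with the remaining signs absorbed by the factor $(-1)^{\sum I-\sum B_0}$ in $Y_I$.

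Two conventions must be fixed carefully here: whether the matrix relating $Y(n+1)$ to $Y(n)$ is $\bigwedge^pC_q$ or its transpose, and whether deleted columns correspond to the subset $I$ or to its complement. If the direct match fails, I would first pass through the complementary-compound duality $\bigwedge^p C\simeq \det C\cdot(\bigwedge^{d_0-p}C)^{-T}$ under the Hodge star. I would then check both candidate sign conventions on the case $(m_1,m_2)=(1,1)$, where $T$ must reproduce \eqref{eq:tridiagrec}, before fixing the general sign bookkeeping. Separately, I need the base identity $\Delta_{B_0}(n)=D_n$. It holds because deleting the last $p$ columns leaves exactly $A_n$.

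Finally, for the second assertion, I would unwind the proof of Theorem~\ref{thm:increasing}. Expanding $D_{k+\ell}$ along its last $m_1+\ell$ rows, the complementary cofactor $X_S(k)$ is, up to its Laplace sign, the determinant on rows $1,\ldots,k-m_1$ and columns $\{1,\ldots,k+m_2-m_1\}\setminus\{k-s_1,\ldots,k-s_{m_2}\}$. Setting $n=k-m_1$, this is $\Delta_I(n)$ with $n-m_1+i=k-s_t$, that is, $i=2m_1-s_t$. The constraint $2m_1-1\ge s_1>\cdots>s_{m_2}\ge m_1-m_2$ is then exactly $1\le i\le d_0$. This gives a bijection $\cS\leftrightarrow\binom{[d_0]}{p}$ with a uniform shift $n=k-m_1$, which completes the identification.
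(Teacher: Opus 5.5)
Your route is the paper's: last-row expansion of $\Delta_I(n+1)$, a split on whether $d_0\in I$, comparison with $\bigwedge^pC_q$, and then $I(S)=\{2m_1-s:s\in S\}$ with $n=k-m_1$. Your final paragraph identifying $X_S(k)$ is exactly the paper's argument and is correct.

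\textbf{The expansion step has an off-by-one error.} Row $n+1$ has $d_0+1$ nonzero entries $a_{j-m_1}$, at columns $n+1-m_1+j$ for $j=0,1,\ldots,d_0$, not for $j\in[d_0]$. The column $n+1-m_1$ (the case $j=0$) lies just left of the window of $\Delta_I(n+1)$, so it is never deleted.
\begin{itemize}
\item If $d_0\notin I$, this pivot vanishes because column $n+1+p$ survives as a zero column. Your pure-shift case is therefore fine.
\item If $d_0\in I$, it contributes $a_{-m_1}\Delta_{I'}(n)$ with $I'=\{1\}\cup\{i+1:i\in I\setminus\{d_0\}\}$.
\end{itemize}
Without this term you get $m_1$ terms instead of $m_1+1$. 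You lose exactly the term that must match the $e_1$-component $-a_{-m_1}/a_{m_2}$ of $C_qe_{d_0}$. For $(m_1,m_2)=(1,1)$ your expansion would read $D_{n+1}=a_0D_n$. The fix is to let $j$ range over $\{0\}\cup([d_0]\setminus I)$. Your formula $I'_j=\{i+1:i\in(I\cup\{j\})\setminus\{d_0\}\}$ then holds verbatim.

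\textbf{The sign and orientation check you leave open is the real content of \eqref{eq:compound-transfer}, and your transpose worry is justified.}
\begin{itemize}
\item The Laplace sign of pivot $j$ is $(-1)^{m_1+j+\#\{i\in I:i<j\}}$.
\item The gauge contributes $(-1)^{\sum_{i\in I}i-\sum_{i\in I'_j}i}=(-1)^{m_1-j}$.
\item Hence for $d_0\in I$ you get $Y_I(n+1)=\sum_j(-1)^{\#\{i\in I:i<j\}}a_{j-m_1}Y_{I'_j}(n)$, and for $d_0\notin I$ you get $Y_I(n+1)=(-1)^pa_{m_2}Y_{I+1}(n)$.
\item On the compound side, reordering the wedge gives $e_{I'_j}$ the coefficient $(-1)^{p+1+\#\{i\in I\setminus\{d_0\}:i>j\}}a_{j-m_1}=(-1)^{\#\{i\in I:i<j\}}a_{j-m_1}$ in $Te_I$. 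In the shift case $Te_I=(-1)^pa_{m_2}e_{I+1}$.
\end{itemize}
So the sign gauge matches perfectly. However, your expansion of $\Delta_I(n+1)$ produces row $I$ of the transfer matrix, while these coefficients form column $I$ of $T$. With the stated convention $C_qe_i=e_{i+1}$, what can actually be proved is $Y(n+1)=T^{\mathsf T}Y(n)$.

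Your proposed $(1,1)$ check shows this directly. There $Y(n)=(-a_1D_{n-1},D_n)^{\mathsf T}$ evolves by $\left[\begin{smallmatrix}0&-a_1\\a_{-1}&a_0\end{smallmatrix}\right]$, while $T=\left[\begin{smallmatrix}0&a_{-1}\\-a_1&a_0\end{smallmatrix}\right]$. The entries $a_1$ and $a_{-1}$ sit in swapped positions, so no sign re-gauging reconciles them. Your Hodge-star fallback does not help either, since it changes the exterior degree rather than transposing within $\bigwedge^p$.

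The paper's proof sketches the same expansion and asserts the untransposed form without doing this computation. You should state the transposed relation instead (equivalently, replace $C_q$ by $C_q^{\mathsf T}$). This costs nothing downstream, because only the characteristic polynomial and the similarity class of $T$ are used later.
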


\begin{proof}
Expand \eqref{eq:DeltaI} along its last row.  If $d_0\notin I$, the last
column of the ambient rectangular section is present.  All pivots except
that last column leave an identically zero last column after deletion, so
there is one transition, obtained by shifting the hole set.  If $d_0\in I$,
the surviving pivots replace the companion coordinate by one of the allowed
band positions.  The coefficients are exactly those of the last column of
$C_q$.  The displayed sign gauge supplies the wedge orientation, giving
\eqref{eq:compound-transfer}.

For $S=(s_1,\ldots,s_p)\in\cS$ put
\[
 I(S)=\{2m_1-s:s\in S\}.
\]
The defining range of $S$ makes $I(S)$ a $p$-subset of $[d_0]$, and every
$p$-subset occurs exactly once.  Comparing the retained and deleted columns
in \eqref{eq:incsystem} with \eqref{eq:DeltaI} identifies its complementary
cofactor with $\Delta_{I(S)}$ after the shift $n=k-m_1$.
\end{proof}

Thus the binomial scale in the increasing-rows method is not accidental:
Algorithm~\ref{alg:increasing} realizes the classical compound/associated-
matrix mechanism in concrete Laplace-cofactor coordinates.  The point of
Theorem~\ref{thm:compound-transfer} is this cofactor-level identification,
which allows a direct comparison with the sparse row-column states below.

\section{Row-column transfer method}\label{sec:rowcolumn}

\subsection{Pentadiagonal prototype}

We begin with $(m_1,m_2)=(2,2)$ and display every Laplace expansion needed
to reveal the six-state graph.  Some of these relations are related by
transposition, which interchanges $a_j$ and $a_{-j}$, but one of the
self-dual boundary minors has to be expanded separately; writing the full
system makes the origin of every row of the transfer matrix visible.

For integers $k>n\ge0$ and increasing index sets, let
\[
 \widehat M_{k,k-j_1,\ldots,k-j_n}^{k,k-i_1,\ldots,k-i_n}
\]
denote the determinant of the minor of $A_{k-1}$ obtained by deleting the
rows $k-i_1,\ldots,k-i_n$ and columns $k-j_1,\ldots,k-j_n$.  We call
\[
 \sigma=\begin{bmatrix}i_1,\ldots,i_n\\ j_1,\ldots,j_n\end{bmatrix}
\]
its \emph{signature}.  For $n=0$ we have $\widehat M_k^k=D_{k-1}$.

Expanding the last available row of the empty-signature minor gives
\begin{equation}\label{eq:rc1}
 \widehat M_{k+1}^{k+1}
 =a_0\widehat M_k^k
 -a_{-1}\widehat M^{k+1,k}_{k+1,k-1}
 +a_{-2}\widehat M^{k+1,k}_{k+1,k-2}.
\end{equation}
The two new minors are then expanded at their exposed boundary columns:
\begin{align}
\widehat M^{k+1,k}_{k+1,k-1}
 &=a_1\widehat M^{k,k-1}_{k,k-1}
   -a_2\widehat M^{k,k-2}_{k,k-1}, \label{eq:rc2}\\
\widehat M^{k+1,k}_{k+1,k-2}
 &=a_1\widehat M^{k,k-1}_{k,k-2}
   -a_2\widehat M^{k,k-2}_{k,k-2}. \label{eq:rc3}
\end{align}
The remaining two one-deletion signatures must also be expanded explicitly:
\begin{align}
\widehat M^{k,k-2}_{k,k-1}
 &=a_{-1}\widehat M^{k-1,k-2}_{k-1,k-2}
   -a_{-2}\widehat M^{k-1,k-2}_{k-1,k-3}, \label{eq:rc4}\\
\widehat M^{k,k-2}_{k,k-2}
 &=a_0\widehat M^{k-1,k-2}_{k-1,k-2}
   -a_{-2}\widehat M^{k,k-1,k-2}_{k,k-2,k-3}. \label{eq:rc5}
\end{align}
The first is the transpose-dual of an earlier boundary pattern after
interchanging $a_j$ and $a_{-j}$; the second is self-dual and is not supplied
by transposition.  Finally,
\begin{equation}\label{eq:rc6}
 \widehat M^{k,k-1,k-2}_{k,k-2,k-3}=a_2\widehat M_{k-3}^{k-3}.
\end{equation}
Thus all six Laplace expansions are visible and close the process on six
signatures.

With
\[
 v_k^{\mathsf T}=\bigl[
 \widehat M_k^k,
 \widehat M^{k+1,k}_{k+1,k-1},
 \widehat M^{k+1,k}_{k+1,k-2},
 \widehat M^{k+1,k-1}_{k+1,k},
 \widehat M^{k+1,k-1}_{k+1,k-1},
 \widehat M^{k+2,k+1,k}_{k+2,k,k-1}
 \bigr],
\]
we obtain
\begin{equation}\label{eq:Pdet}
 v_{k+1}=Qv_k,
 \qquad
 Q=
 \begin{bmatrix}
 a_0&-a_{-1}&a_{-2}&0&0&0\\
 a_1&0&0&-a_2&0&0\\
 0&a_1&0&0&-a_2&0\\
 a_{-1}&-a_{-2}&0&0&0&0\\
 a_0&0&0&0&0&-a_{-2}\\
 a_2&0&0&0&0&0
 \end{bmatrix}.
\end{equation}
By Cayley--Hamilton, the characteristic polynomial of $Q$ annihilates every
coordinate sequence of $v_k$, in particular $D_{k-1}$, and gives
\eqref{eq:sweet}.

For a recurrence of a distinguished coordinate, however, it is not
necessary to compute the full characteristic polynomial of $Q$.  Standard
Krylov scalarization, applied to $Q^{\mathsf T}$ and the corresponding
coordinate vector $e$, finds the first linear dependence among
$e^{\mathsf T},e^{\mathsf T}Q,e^{\mathsf T}Q^2,\ldots$ and hence the vector
minimal polynomial of $e$ for $Q^{\mathsf T}$, equivalently of
$e^{\mathsf T}$ under right multiplication by $Q$; see
\cite[Chapter~8]{ZimmermannEtAl2018}.  This polynomial annihilates the
observable quotient and can have degree smaller than the transfer dimension.
It divides the matrix minimal polynomial of $Q$, which in turn divides the
characteristic polynomial; for a particular initial state the resulting
scalar sequence can satisfy an even smaller minimal recurrence.  The gap
between a mechanically generated transfer dimension and a much smaller
annihilator has an earlier circulant-permanent precedent: in the notation of
Golin, Leung, and Wang \cite{GolinLeungWang2006}, their two-sided cycle-cover
classification gives a $2^{2\bar s}$-dimensional transfer, while a polynomial
of degree $2^{\bar s}-1$ already annihilates it and recovers Minc's
recurrence.  That result is an annihilation statement rather than a generic
minimality theorem.  For special parameter subfamilies the observable degree can be smaller than
the transfer dimension.  This is one reason to separate construction of an
annihilator from subsequent scalar minimality analysis.  Implementations of
the Krylov scalarization are included in the accompanying code repository.

The next balanced case is displayed in Appendix~\ref{app:33}: for
$(m_1,m_2)=(3,3)$ the construction closes on $20$ signatures and gives a
$20\times20$ transfer matrix with $50$ nonzero entries.

For permanents the same six signatures occur, but the Laplace signs are
removed.  Thus
\begin{equation}\label{eq:Pperm}
 \widetilde v_{k+1}=Q^+\widetilde v_k,
 \qquad
 Q^+=
 \begin{bmatrix}
 a_0&a_{-1}&a_{-2}&0&0&0\\
 a_1&0&0&a_2&0&0\\
 0&a_1&0&0&a_2&0\\
 a_{-1}&a_{-2}&0&0&0&0\\
 a_0&0&0&0&0&a_{-2}\\
 a_2&0&0&0&0&0
 \end{bmatrix}.
\end{equation}
Its characteristic polynomial gives
\begin{align}
P_{k+6}={}&a_0P_{k+5}+(a_{-1}a_1+a_{-2}a_2)P_{k+4}
 +(a_{-2}a_1^2+a_{-1}^2a_2)P_{k+3}\notag\\
&+a_{-2}a_2(a_{-1}a_1+a_{-2}a_2)P_{k+2}
 -a_0(a_{-2}a_2)^2P_{k+1}-(a_{-2}a_2)^3P_k.
\label{eq:pentaperm}
\end{align}
\subsection{Formal row-column algorithm}\label{sec:algorithm2}

A \emph{boundary minor family} is represented by a normalized signature
$\sigma=(I,J)$ recording the rows and columns missing near the lower-right
boundary of the growing Toeplitz section.  Two representatives are
identified if one is obtained from the other by simultaneously translating
all row and column indices; common terminal rows and columns are stripped
before the signature is stored.

For a signature $\sigma$, let $M_\sigma(k)$ denote its normalized $k$th
representative.  To expand $M_\sigma(k+1)$ we choose the remaining row or
column with the largest original index; in case of a tie we choose the row.
Each nonzero Laplace term is normalized to another signature and contributes
a coefficient $\pm a_s$.  Distinct nonzero terms have distinct normalized
target signatures.  For permanents the same transition graph is used with
the signs removed.  For definiteness, we number signatures in
\emph{first-discovery order}, beginning with the principal state
$\varnothing$; this is the ordering used in the explicit transfer matrices
below and in Appendix~\ref{app:33}.

\begin{algorithm}[H]
\caption{Row-column transfer construction}\label{alg:rowcolumn}
\begin{algorithmic}[1]
\Require $m_1,m_2$ and diagonal values $a_{-m_1},\ldots,a_{m_2}$
\Ensure A finite transfer matrix $Q$ whose characteristic polynomial annihilates $D_k$ (or $P_k$)
\State $\mathsf{Todo}\gets[\varnothing]$ (an ordered queue), $\mathsf{Done}\gets\varnothing$
\State Assign state number $1$ to $\varnothing$
\While{$\mathsf{Todo}\ne\varnothing$}
  \State Remove the first signature $\sigma$ from $\mathsf{Todo}$
  \State Choose the largest-index available boundary row or column of $M_\sigma(k+1)$
  \State Laplace-expand along it
  \For{each nonzero term $c\,M(k)$ in the expansion, in Laplace order}
     \State Normalize $M(k)$ to a signature $\tau$
     \State Record $q_{\sigma,\tau}\gets c$
     \If{$\tau\notin\mathsf{Done}\cup\mathsf{Todo}$}
        \State Append $\tau$ to $\mathsf{Todo}$ and assign it the next state number
     \EndIf
  \EndFor
  \State Add $\sigma$ to $\mathsf{Done}$
\EndWhile
\State Assemble $Q$ in this first-discovery order
\State \Return $Q$
\end{algorithmic}
\end{algorithm}

For fixed $m_1,m_2$, Algorithm~\ref{alg:rowcolumn} terminates.  Indeed, let
$w=m_1+m_2$.  At every step the pivot is an extreme available row or column,
and a nonzero entry in that pivot can connect only to an index within the
fixed band.  After normalization, the discrepancy between a boundary minor
and a leading principal Toeplitz section is therefore confined to a boundary
window of width depending only on $w$; no expansion can create a deleted row
or column arbitrarily far behind the moving boundary.  Hence only finitely
many normalized signatures can occur, so the queue eventually empties.  The
next subsection sharpens this coarse finiteness argument by describing the
reachable signatures exactly and counting the resulting transitions.

\subsection{Exact signature classification and sparsity}\label{sec:states}

We now determine the normalized signature graph exactly.  We assume here that
$m_1,m_2\ge1$; the one-sided edge cases $m_1=0$ or $m_2=0$ are triangular and immediate.
The statement concerns the symbolic band coefficients, before a specialization
can make individual transitions vanish.  It is convenient to use the
\emph{final normalized representative}: at level $j$ its length
is $\ell=j+1$ and, after removing the common moving $k$-offset, it has the
form
\[
 R=(\ell,R_2,\ldots,R_\ell),\qquad
 C=(\ell,C_2,\ldots,C_\ell).
\]
This is precisely the form immediately before the Laplace expansion in
Algorithm~\ref{alg:rowcolumn}.  Let $m=m_1$ and $p=m_2$.  For an integer
$r$, write $\langle r\rangle_m$ for its representative in $[m]$ modulo $m$.

For $A\in\binom{[m]}j$ and $B\in\binom{[p]}j$, define
\[
 \widehat A=
 \begin{cases}
 A,&1\notin B,\\
 (A\setminus\{1\})\cup
   \bigl(\{m+1\}\text{ if }1\in A\bigr),&1\in B,
 \end{cases}
\]
and, writing $B=\{b_1<\cdots<b_j\}$ and
$\widehat A=\{\widehat a_1<\cdots<\widehat a_j\}$, define
\begin{equation}\label{eq:sigmaAB}
 \Sigma(A,B)=
 \left(
  (\ell,\ell-b_1,\ldots,\ell-b_j),
  (\ell,\ell-\widehat a_1,\ldots,\ell-\widehat a_j)
 \right).
\end{equation}
For $j=0$, this is the principal state $((1),(1))$.

\begin{theorem}[Exact row-column state structure]\label{thm:states}
Algorithm~\ref{alg:rowcolumn}, for determinants or permanents, reaches
exactly the states \eqref{eq:sigmaAB}, with
\[
 A\in\binom{[m_1]}j,\qquad B\in\binom{[m_2]}j,
 \qquad0\le j\le\min(m_1,m_2).
\]
Consequently the number of states at level $j$ is
\begin{equation}\label{eq:statelevel}
 N_j=\binom{m_1}{j}\binom{m_2}{j},
\end{equation}
the total transfer dimension is
\begin{equation}\label{eq:statecount}
 \sum_j\binom{m_1}{j}\binom{m_2}{j}
 =\binom{m_1+m_2}{m_1},
\end{equation}
and the number of nonzero transition entries is
\begin{equation}\label{eq:sparsity}
 N_{\mathrm{nz}}
 =\frac{m_1m_2+m_1+m_2}{m_1+m_2}
  \binom{m_1+m_2}{m_1}.
\end{equation}
\end{theorem}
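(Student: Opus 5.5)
The plan is an induction over the breadth-first exploration of Algorithm~\ref{alg:rowcolumn}, run on the explicit family $\mathcal F=\{\Sigma(A,B)\}$ of \eqref{eq:sigmaAB}. I will show that $\mathcal F$ is closed under one expansion step and that every member of $\mathcal F$ is reached from the principal state. The counting claims then follow. The state count \eqref{eq:statelevel} requires that $(A,B)\mapsto\Sigma(A,B)$ be injective. For this, note that $B$ is read off the row part of $\Sigma(A,B)$ and $\widehat A$ off the column part. The map $A\mapsto\widehat A$ is injective for fixed $B$ because it only relabels $1$ as $m+1$. The total \eqref{eq:statecount} is then Vandermonde's identity.

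\textbf{Closure.} Fix $\sigma=\Sigma(A,B)$ at level $j$ and write down $M_\sigma(k+1)$ explicitly. The missing rows are $\ell-b_i$ and the missing columns are $\ell-\widehat a_i$, relative to the moving offset, with the common terminal index $\ell$ present in both. By the tie rule, the pivot is the row with largest index. Its nonzero entries $a_s$ lie at the columns $\ell+s$ for $-m_1\le s\le m_2$ that are present. Two kinds of Laplace term occur. A term either pairs the pivot row with the terminal column, giving coefficient $a_0$, which strips a common index and leaves $j$ unchanged. Otherwise it deletes a further column, which raises or lowers the level by $1$ depending on whether that column fills a row hole or opens a column hole.

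For each admissible $s$ I will compute the normalized target explicitly. After translating by one step, the row offsets $b$ shift to $b-1$; a row offset that reaches $0$ is stripped, and this is exactly the exceptional case $1\in B$. The column offsets shift the same way. I then need to check that the result is $\Sigma(A',B')$ for a specific pair $(A',B')$.

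I expect this to be the main obstacle. The hat convention, including the replacement of $1$ by $m+1$ when $1\in B$, must be shown to be exactly the bookkeeping that makes the shifted sets land back in $[m_1]$ and $[m_2]$. The $\langle\cdot\rangle_m$ notation suggests a cyclic relabelling of $A$ under the shift, something like $A'=\{\langle a-1\rangle_m\}$ with the new or removed element inserted. My first step will be to tabulate the cases $(1\in A?,1\in B?)$, verify them against the pentadiagonal system \eqref{eq:rc1}--\eqref{eq:rc6}, and only then write the general rule.

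\textbf{Reachability.} I will induct on $j$ and, within a level, on a suitable order such as $\sum A+\sum B$. The goal is to exhibit, for every $\Sigma(A,B)$, a predecessor already known to be reachable. Natural candidates are a level-$(j-1)$ state obtained by removing a maximal element, or a same-level state obtained by undoing the cyclic shift. Because the coefficients are symbolic, no transition can cancel.

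\textbf{Sparsity.} From the closure analysis, the out-degree of $\Sigma(A,B)$ equals the number of present columns $\ell+s$ in the window that produce nonzero terms. This should be $1+(m_1-|A|)+(p\text{-dependent term})$, or, dually, a quantity determined by the number of holes. Summing over all states gives an expression of the form
\[
\sum_j\binom{m_1}{j}\binom{m_2}{j}\,(\alpha+\beta j).
\]
I will evaluate it using the identities $\sum_j j\binom{m_1}{j}\binom{m_2}{j}=m_2\binom{m_1+m_2-1}{m_1-1}=\frac{m_1m_2}{m_1+m_2}\binom{m_1+m_2}{m_1}$ and Vandermonde's identity. The target \eqref{eq:sparsity} equals $\binom{m_1+m_2}{m_1}\bigl(1+\frac{m_1m_2}{m_1+m_2}\bigr)$. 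This suggests that the average out-degree is $1+j$ per state, i.e. each state has $j+1$ successors. I will confirm this against \eqref{eq:Pdet}. There the rows have weights $3,2,2,2,2,1$, summing to $12$, and $6\bigl(1+\tfrac{4}{4}\bigr)=12$ as required. The per-state degrees are not uniformly $j+1$, so the closure case analysis must give degrees whose level sums match; it is the level sums, not individual degrees, that I will compute.
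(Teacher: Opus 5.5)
\textbf{Gap: the pivot rule.} Your closure step misreads which line the algorithm expands, and the error propagates through the rest of the plan. You treat the leading entry $\ell$ of $R$ and $C$ as a row and column that are present. That would make every expansion a tie, so the pivot would always be a row. In fact $\ell$ plays the role of the index $k$ in $\widehat M^{k,\dots}_{k,\dots}$, which lies outside $A_{k-1}$. The last genuine row and column are those of deficit $1$, and a stored state never has $1$ in both deficit sets. This gives two cases.
\begin{itemize}
\item If $1\notin B$, the last row is present and is the pivot. Its terms are $a_{-s}$ for $0\le s\le m_1$ with $s+1\notin A$, so there are $m_1+1-j$ of them.
\item If $1\in B$, the last row is missing while the last column is present and strictly larger. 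Algorithm~\ref{alg:rowcolumn} therefore expands that \emph{column}. Its terms are $a_s$ for $1\le s\le m_2$ with $s+1\notin B$, so there are $m_2+1-j$ of them.
\end{itemize}
Under your row-only rule no coefficient $a_s$ with $s>0$ could ever appear, which contradicts \eqref{eq:rc2}, \eqref{eq:rc3} and \eqref{eq:Qm1}. The family is then not even closed. For $(m_1,m_2)=(2,2)$, expand $\begin{bmatrix}1\\2\end{bmatrix}$ (rows $1,\dots,k-2$ and columns $1,\dots,k-3,k-1$ of $A_{k-1}$) along row $k-2$. Through $a_{-2}$ this yields the generically nonzero minor with signature $\begin{bmatrix}1,2\\2,4\end{bmatrix}$, which is not of the form \eqref{eq:sigmaAB}. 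Your level bookkeeping is also off, because stripping can cascade. The $a_0$ term need not preserve $j$, and jumps of more than one level occur; see the $a_0$ entry in row $5$ and the $a_2$ entry in row $6$ of \eqref{eq:Pdet}.

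\textbf{What the proof needs.} The missing idea is this row/column dichotomy keyed on whether $1\in B$, and the paper's proof is built on it.
\begin{itemize}
\item Closure is direct substitution of \eqref{eq:sigmaAB} into both Laplace rules.
\item Reachability alternates the two rules. A state with $1\in B$ is the row-pivot child of the level-$(j-1)$ state $\Sigma(A\setminus\{a_*\},B\setminus\{1\})$, where $a_*=1$ if $1\in A$ and $a_*=\min A$ otherwise. A state with $1\notin B$ is the column-pivot child of a same-level state $\Sigma(A_1,B_1)$ with $1\in B_1$. Some states can only be reached this way: in $Q_{3,3}$ the only in-edge of $\begin{bmatrix}3\\3\end{bmatrix}$ is the column-pivot edge from $\begin{bmatrix}1\\4\end{bmatrix}$ with coefficient $a_3$.
\item Sparsity counts $\binom{m_1}{j}\binom{m_2-1}{j}$ row states with $m_1+1-j$ terms each, and $\binom{m_1}{j}\binom{m_2-1}{j-1}$ column states with $m_2+1-j$ terms each.
\end{itemize}
Your expectation that level $j$ contributes $N_j(1+j)$ fails level by level: for $(2,2)$ the level totals are $3,8,1$, not $1,8,3$. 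The correct level total is $N_j\bigl(m_1+1-\tfrac{m_1}{m_2}j\bigr)$. This does have your form $\alpha+\beta j$, and your two summation identities then give \eqref{eq:sparsity}. Your injectivity argument and the Vandermonde step are fine.
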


\begin{proof}
Let a final normalized representative be $(R,C)$ of length $\ell=j+1$.
If $x_1<\cdots<x_j$ and $y_1<\cdots<y_j$ are the increasing rearrangements
of $R_2,\ldots,R_\ell$ and $C_2,\ldots,C_\ell$, respectively, define
\begin{equation}\label{eq:AB-from-signature}
 B_\sigma=\{\ell-x_t:1\le t\le j\},\qquad
 A_\sigma=\{\langle\ell-y_t\rangle_m:1\le t\le j\}.
\end{equation}
The formula \eqref{eq:sigmaAB} is inverse to this map.

The normalization rule has a simple deficit-coordinate interpretation:
whenever the row and column deficit sets both contain $1$, remove the common
$1$ and subtract $1$ from every remaining deficit.  In these coordinates
$1\in B$ is equivalent to $R_1-R_2=1$, hence tells us exactly whether the
algorithm expands a column or a row.  Direct substitution of
\eqref{eq:sigmaAB} into the two Laplace rules shows that every child is again
of the form $\Sigma(A',B')$.  Thus the displayed family is closed under the
algorithm.

For the converse, let $j>0$.  If $1\in B$, choose
$a_*=1$ when $1\in A$ and otherwise put $a_*=\min A$.  Put
\[
 A_0\defeq A\setminus\{a_*\},\qquad
 B_0\defeq B\setminus\{1\},
\]
and use the row pivot
\[
 q=\begin{cases}m,&a_*=1,\\a_*-1,&a_*>1.\end{cases}
\]
in $\Sigma(A_0,B_0)$.  Its normalized child is exactly $\Sigma(A,B)$.

If $1\notin B$, choose $b_*=p$ when $p\in B$ and otherwise take
$b_*=\max B$.  Define
\[
 \begin{aligned}
 B_1&\defeq\{1\}\cup\{b+1: b\in B\setminus\{b_*\}\},\\
 A_1&\defeq\{\langle a+1\rangle_m: a\in A\}.
 \end{aligned}
\]
Then $1\in B_1$, so the preceding paragraph makes $\Sigma(A_1,B_1)$
reachable, and its column expansion with pivot $q=b_*$ has normalized child
$\Sigma(A,B)$.  Induction on $j$ proves reachability of every displayed
state.

The level and total counts follow immediately, the latter from Vandermonde's
identity.  A state with $1\notin B$ is a row-expansion state and has exactly
$m+1-j$ nonzero Laplace terms; a state with $1\in B$ is a column-expansion
state and has exactly $p+1-j$ terms.  There are respectively
\[
 \binom mj\binom{p-1}{j},\qquad
 \binom mj\binom{p-1}{j-1}
\]
such states.  Summing these contributions over $j$ and using Vandermonde
gives \eqref{eq:sparsity}.  Laplace signs do not affect the graph, so the
same classification holds for permanents.
\end{proof}

For $(m_1,m_2)=(3,3)$, Theorem~\ref{thm:states} gives the $20$ states and
$50$ entries displayed explicitly in Appendix~\ref{app:33}.

\begin{corollary}[The $m_2=1$ state graph]\label{prop:states-m1}
For the symbolic $(m,1)$ family, Algorithm~\ref{alg:rowcolumn} reaches
exactly $m+1$ states.  In the order
\[
 \varnothing,\quad
 \sigma_1=\begin{bmatrix}1\\2\end{bmatrix},\quad
 \sigma_2=\begin{bmatrix}1\\3\end{bmatrix},\quad\ldots,\quad
 \sigma_m=\begin{bmatrix}1\\m+1\end{bmatrix},
\]
the determinant transfer matrix is
\begin{equation}\label{eq:Qm1}
 Q_{m,1}=
 \begin{bmatrix}
 a_0&-a_{-1}&a_{-2}&\cdots&(-1)^m a_{-m}\\
 a_1&0&0&\cdots&0\\
 0&a_1&0&\cdots&0\\
 \vdots&&\ddots&\ddots&\vdots\\
 0&\cdots&0&a_1&0
 \end{bmatrix}.
\end{equation}
Its characteristic polynomial is the denominator of
\eqref{eq:hessenberg-gf} with $x=t^{-1}$ and powers of $t$ cleared.
\end{corollary}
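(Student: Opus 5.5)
The plan is to specialize Theorem~\ref{thm:states} to $m_2=p=1$ and then read off the transitions by hand. Since $\binom{[1]}{j}$ is empty for $j\ge2$, only levels $j=0$ and $j=1$ survive. Level $0$ is the single principal state. Level $1$ consists of the pairs $A=\{a\}\subseteq[m]$, $B=\{1\}$, so there are $m$ states. This gives $m+1$ states in total, in agreement with \eqref{eq:statecount}.

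Next I would match these states with the listed signatures $\sigma_r=\begin{bmatrix}1\\ r+1\end{bmatrix}$, $r=1,\dots,m$. Because $1\in B$, the hat rule applies. For $a=1$ it gives $\widehat A=\{m+1\}$, and for $a>1$ it gives $\widehat A=A$. Reading \eqref{eq:sigmaAB} back into deleted-row/column offsets, as in the pentadiagonal prototype, the single row deficit is $1$ and the column deficit runs over $2,\dots,m+1$. This identifies the level-$1$ states with the $\sigma_r$.

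I would then verify the first-discovery order and the matrix entries directly, imitating \eqref{eq:rc1}--\eqref{eq:rc3}. Expanding the last row of the principal state gives the term $a_0$ back to $\varnothing$ and the terms $(-1)^r a_{-r}$ into $\sigma_r$ for $r=1,\dots,m$. These $m$ children are discovered in Laplace order, which fixes the ordering and gives the first row of \eqref{eq:Qm1}. The count agrees with the $m+1-j$ row-expansion terms predicted at $j=0$.

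For a state $\sigma_r$ the column pivot is taken, consistent with $1\in B$ forcing a column expansion. By the sparsity count, this exposed column has exactly $p+1-j=1$ nonzero entry, namely the superdiagonal $a_1$. Deleting it shifts the column deficit down by one, so $\sigma_r\mapsto\sigma_{r-1}$ for $r\ge2$ and $\sigma_1\mapsto\varnothing$, each with coefficient $a_1$ and sign $+$. This yields the subdiagonal of \eqref{eq:Qm1}.

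The main obstacle is the sign bookkeeping. I need to confirm that the column expansion carries sign $+$ and the row expansion alternating signs $(-1)^r$ under the paper's normalization of $\widehat M$. I would check this against \eqref{eq:rc2}, where the $a_1$ term indeed carries sign $+$, and \eqref{eq:rc1}, where the signs alternate.

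Finally, \eqref{eq:Qm1} is a companion-type matrix: first row $c_0,\dots,c_m$ with $c_r=(-1)^r a_{-r}$, and $a_1$ on the subdiagonal. Expanding $\det(tI-Q_{m,1})$ along the first row gives
\[
t^{m+1}-\sum_{r=0}^m (-1)^r a_{-r}a_1^r t^{m-r}.
\]
This is $t^{m+1}$ times the denominator of \eqref{eq:hessenberg-gf} at $x=t^{-1}$, which matches Proposition~\ref{prop:hessenberg}.
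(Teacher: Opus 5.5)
Your proposal is correct and follows the route implicit in the paper. The paper gives no separate proof and treats the statement as the $p=1$ specialization of Theorem~\ref{thm:states}: the principal state is a row-expansion state with $m+1$ terms $(-1)^r a_{-r}$, each of the $m$ level-one states is a column-expansion state with the single term $a_1$, and expanding the resulting companion-type determinant gives the characteristic polynomial. The sign check you flag as the main obstacle needs no comparison with \eqref{eq:rc2}: the pivot $a_1$ lies in the last row and last column of the $k\times k$ minor, so its cofactor sign is $+$, and $a_{-r}$ at position $(k,k-r)$ of $A_k$ carries $(-1)^{r}$.
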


\begin{remark}[From a recurrence to a distant term]\label{rem:fastterms}
Once an annihilating recurrence is known, a distant term need not be
generated sequentially.  Fiduccia's method \cite{Fid}, the modular squaring
approach of Khomovsky \cite{Kho}, and the Bostan--Mori algorithm
\cite{BostanMori2021} evaluate distant terms efficiently.  Thus recurrence
construction and recurrence evaluation are naturally separate stages.
\end{remark}

\subsection{Comparison with the compound companion}\label{sec:compound-comparison}

Theorem~\ref{thm:states} determines the row-column state graph, while
Theorem~\ref{thm:compound-transfer} realizes the complementary cofactors of
the increasing-rows construction in the classical compound companion state,
consistent with Shevelev's associated-matrix construction \cite{She}.  We now
compare the two determinant transfers directly.  Put
\[
 m=m_1,\qquad p=m_2,\qquad d=\binom{m+p}{p},
\]
and work over the rational function field
$F=\mathbb Q(a_{-m},\ldots,a_p)$.  Let
\[
 T=(-1)^pa_p\,\bigwedge^p C_q,
\]
so that the characteristic polynomial of $T$ is the Widom polynomial
$\chi_{m,p}(t)$ from \eqref{eq:widompoly}.

\begin{theorem}[Generic row-column/compound similarity]\label{thm:rowcolumn-compound}
Let $Q$ be the determinant row-column transfer matrix of
Algorithm~\ref{alg:rowcolumn}.  Then, over $F$,
\begin{equation}\label{eq:charQ-Widom}
 \det(tI-Q)=\chi_{m,p}(t).
\end{equation}
Moreover, $Q$ and $T$ are similar over $F$.  Hence the two transfers are
similar after every specialization outside a proper algebraic subset of the
genuine-band parameter space.
\end{theorem}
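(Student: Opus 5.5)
Our plan is to prove \eqref{eq:charQ-Widom} first and then get similarity from the fact that $\chi_{m,p}$ has distinct roots over $F$.

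\textbf{Step 1: the characteristic polynomial.} We would not compute $\det(tI-Q)$ directly. Instead we use the fact that both transfers generate the same scalar sequence $D_n$.

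By Theorem~\ref{thm:compound-transfer}, $D_n=e_{B_0}^{\mathsf T}T^nY(0)$. The Widom representation \eqref{eq:widomform} has all $d$ coefficients $C_J$ nonzero in the generic case. Hence the minimal annihilator of $(D_n)$ over $F$ is exactly $\chi_{m,p}$. The reason is that the $W_J$ are distinct and the exponentials $W_J^n$ are linearly independent. So $\chi_{m,p}$ has degree $d$ and is irreducible-free of repeated roots, i.e.\ separable, in $F$.

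On the row-column side, the principal coordinate of $v_k$ is $D_{k-1}$, so the minimal annihilator of $(D_n)$ divides the minimal polynomial of $Q$. By Theorem~\ref{thm:states}, $Q$ is $d\times d$. Therefore $\chi_{m,p}$, of degree $d$, divides a monic polynomial $\det(tI-Q)$ of degree $d$, and the two are equal. This argument is uniform in $(m,p)$ and needs only the dimension count of Theorem~\ref{thm:states} and genericity of the Widom coefficients.

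\textbf{Step 2: similarity.} Over the algebraic closure $\overline F$, both $Q$ and $T$ have the same characteristic polynomial with $d$ distinct roots. Hence both are diagonalizable with the same spectrum, and so they are similar over $\overline F$.

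Similarity over $\overline F$ descends to $F$. This follows from rational canonical form: both are cyclic with the same minimal polynomial $\chi_{m,p}$, so both are similar over $F$ to its companion matrix. Concretely, let $K_Q$ and $K_T$ be the Krylov matrices built from $e_\varnothing$ and $e_{B_0}$. They are invertible over $F$ because the scalar sequence already has minimal order $d$. Then $S=K_QK_T^{-1}$ gives $Q=STS^{-1}$, with the transposes arranged appropriately.

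\textbf{Step 3: specialization.} The entries of $S$ and $S^{-1}$ are rational in the $a_s$. The conjugacy $Q=STS^{-1}$ persists wherever $\det K_Q\det K_T\ne0$ and the denominators of $S$ are nonzero. These conditions define a nonempty Zariski-open subset of $\mathcal P_{m,p}$, since the determinants are nonzero polynomials in $F$.

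\textbf{Main obstacle.} The delicate point is justifying that $\chi_{m,p}$ is the exact minimal annihilator of $D_n$ over $F$. This requires the $W_J$ to be pairwise distinct for generic coefficients and every $C_J$ to be nonzero. Distinctness should follow because the roots are algebraically independent over $\mathbb Q$ after adjoining them.

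For the nonvanishing of the $C_J$, I would first invoke Widom's explicit formula. Here $C_J$ is a nonzero product of root differences, up to constants. If that citation is deemed insufficient, a fallback is to exhibit a single specialization where the Krylov matrix of $Q^{\mathsf T}$ at $e_\varnothing$ is nonsingular. One candidate is a near-triangular choice with distinct root products. Such a specialization would show the Krylov determinant is a nonzero polynomial.
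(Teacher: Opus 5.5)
Your proposal is correct and follows essentially the paper's argument. In both, $\det(tI-Q)$ and $\chi_{m,p}$ are monic degree-$d$ annihilators of $(D_n)$, and $(D_n)$ has minimal order $d$ because Widom's coefficients $C_J$ are nonzero and the $W_J$ are distinct; similarity over $F$ then follows because $Q$ and $T$ are both cyclic with the same squarefree characteristic polynomial.

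Your explicit Krylov conjugator $S$ is a useful addition that makes the specialization clause concrete, which the paper leaves implicit. Your stated fallback, however, would have to show that the $d\times d$ Hankel matrix of $(D_n)$ is nonsingular at some specialization, not merely the observability Krylov matrix at $e_\varnothing$, since observability alone does not force the scalar sequence to have minimal order $d$.
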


\begin{proof}
By Theorem~\ref{thm:states}, $Q$ has size $d$.  Its characteristic
polynomial is therefore a monic degree-$d$ annihilator of the principal
determinant coordinate.  Widom's formula \eqref{eq:widomform} gives another
monic degree-$d$ annihilator, namely $\chi_{m,p}(t)$.

On the nonempty Zariski-open set where the roots of $q$ are simple, the
products $W_J$ in \eqref{eq:widomform} are pairwise distinct, and the
corresponding Widom coefficients are nonzero.  There the determinant
sequence has minimal scalar recurrence order $d$.  Hence its two monic
degree-$d$ annihilators coincide.  Since their coefficients are rational
functions of the band parameters, equality on this dense open set proves
\eqref{eq:charQ-Widom} over $F$.

The polynomial $\chi_{m,p}(t)$ is squarefree over $F$, because its roots are
generically distinct.  Thus both $Q$ and $T$ have minimal polynomial equal
to their common characteristic polynomial $\chi_{m,p}(t)$.  Each is
therefore cyclic, with rational canonical form consisting of the single
companion block of $\chi_{m,p}$, and hence $Q\sim T$ over $F$.
\end{proof}

\begin{remark}[Exterior-power meaning of the levels]\label{rem:plucker-states}
The $p$-subsets of $[m+p]$ are equivalently partitions inside the
$p\times m$ rectangle.  The exchange level $j$ is their Durfee/Frobenius
rank, and
\[
 \#\{\lambda:\operatorname{Durfee}(\lambda)=j\}
 =\binom mj\binom pj,
\]
which is exactly the level count in Theorem~\ref{thm:states}.
\end{remark}

\subsection{Comparison of Laplace-closure state models}\label{sec:laplace-comparison}
The central binomial scale in the balanced determinant problem is visible in
several constructive descriptions, but the coordinates and closure mechanisms
are genuinely different.  The increasing-rows method eliminates a fixed
family of complementary cofactors; the row-column method recursively closes
translated boundary minors; the compound/Widom description uses a fixed
exterior power; and Hendel's independent Laplace procedure
\cite{Hendel2026} proceeds by successive first-row expansions followed by a
closing first-column expansion.

If $R$ is Hendel's Toeplitz order, Lemma~7.1 of \cite{Hendel2026} indexes the
boundary-column patterns at expansion stage $k$ by
\[
 \{(b_1,\ldots,b_k):2\le b_1<\cdots<b_k\le R+k-1\},
\]
so there are $\binom{R+k-2}{k}$ such patterns.  For a balanced $(m,m)$ band,
$R=m+1$, and the initial determinant family together with stages
$1,\ldots,m$ therefore has total size
\[
 1+\sum_{k=1}^{m}\binom{m+k-1}{k}=\binom{2m}{m}.
\]
Thus all four viewpoints meet the same binomial dimension in the balanced
case, without implying that their states are identical.

\begin{center}
\footnotesize
\setlength{\tabcolsep}{3.5pt}
\begin{tabular}{@{}>{\raggedright\arraybackslash}p{0.14\textwidth}>{\raggedright\arraybackslash}p{0.24\textwidth}>{\raggedright\arraybackslash}p{0.32\textwidth}>{\centering\arraybackslash}p{0.12\textwidth}@{}}
\toprule
\textbf{Realization} & \textbf{State coordinates} & \textbf{Closure/evolution} & \textbf{Balanced size} \\
\midrule
Hendel & boundary-column sets after successive leading-row deletions & staged first-row expansion with closing first-column step & $\binom{2m}{m}$ \\
Increasing rows & fixed complementary cofactors & simultaneous elimination across $d+1$ growing sections & $\binom{2m}{m}$ \\
Row-column & exchanged boundary-subset pairs $(A,B)$ & recursive extreme-boundary Laplace expansion and normalization & $\binom{2m}{m}$ \\
Compound / Widom & $m$-subsets, equivalently $\bigwedge^m$ coordinates & companion/exterior-power action and root-product modes & $\binom{2m}{m}$ \\
\bottomrule
\end{tabular}
\end{center}

Despite their common binomial scale, the Laplace closures discussed above
use genuinely different state stratifications.  Theorem~\ref{thm:compound-transfer}
identifies the increasing-rows cofactors with the compound coordinates, while
Theorem~\ref{thm:rowcolumn-compound} shows that the independently generated
row-column transfer is generically similar to the same compound realization.
The agreement of the counts is therefore structural evidence for the binomial
state size, not a consequence of using the same state model.

\section{Sparse Toeplitz--Hessenberg spectral specialization}\label{sec:sparse-spectrum}

The recurrence theory above can also encode spectral information for concrete
Toeplitz families.  We record one particularly sparse example because its
scalar recurrence collapses to a classical threefold-symmetric
$2$-orthogonal polynomial system.  The root-of-unity geometry of positive
double-band matrices is itself classical: McMillen \cite{McMillen2009}
proved that their nonzero eigenvalues occur in root-of-unity orbits whose
period is determined by the separation of the two bands; for offsets $-2$
and $+1$ the period is $3$.  Our purpose here is therefore not to claim the
three-ray geometry in isolation, but to combine the explicit scalar
recurrence with the $2$-orthogonal identification, which makes the radial
roots, their interlacing, and the sharp endpoint transparent.  This section
is logically independent of the two general recurrence constructions: it
illustrates what can be read off once a short Toeplitz--Hessenberg recurrence
has been identified.

\begin{corollary}[A three-star spectrum from the sparse $(2,1)$ family]
\label{cor:sparse-hessenberg-spectrum}
Let $A_n=A_n(x,y)$ be the $n\times n$ Toeplitz matrix whose only possibly
nonzero diagonals are $a_{-2}=y$ and $a_1=x$, and put
$c\defeq x^2y$.  If
\[
 \chi_n(\lambda)\defeq\det(\lambda I_n-A_n),
\]
then
\begin{equation}\label{eq:sparse-hessenberg-rec}
 \chi_n(\lambda)=\lambda\chi_{n-1}(\lambda)-c\chi_{n-3}(\lambda),
 \qquad
 \chi_0=1,\quad \chi_1=\lambda,\quad \chi_2=\lambda^2,
\end{equation}
and consequently
\begin{align}
 \sum_{n\ge0}\chi_n(\lambda)t^n
 &=\frac{1}{1-\lambda t+ct^3},
 \label{eq:sparse-hessenberg-gf}\\
 \chi_n(\lambda)
 &=\sum_{j=0}^{\lfloor n/3\rfloor}
   (-1)^j\binom{n-2j}{j}c^j\lambda^{n-3j}.
 \label{eq:sparse-hessenberg-explicit}
\end{align}
For $r\in\{0,1,2\}$ define
\begin{equation}\label{eq:sparse-cubic-component}
 R_m^{(r)}(u)
 \defeq
 \sum_{j=0}^{m}(-1)^j
 \binom{3m+r-2j}{j}u^{m-j}.
\end{equation}
If $c\ne0$, then
\begin{equation}\label{eq:sparse-cubic-decomposition}
 \chi_{3m+r}(\lambda)
 =c^m\lambda^r
 R_m^{(r)}\!\left(\frac{\lambda^3}{c}\right).
\end{equation}
Each $R_m^{(r)}$ has $m$ simple positive zeros, denoted
$0<\xi_{m,1}^{(r)}<\cdots<\xi_{m,m}^{(r)}$, and they satisfy
\begin{equation}\label{eq:sparse-cubic-interlacing}
 0<\xi_{m,1}^{(0)}<\xi_{m,1}^{(1)}<\xi_{m,1}^{(2)}
 <\xi_{m,2}^{(0)}<\cdots<\xi_{m,m}^{(2)}<\frac{27}{4}.
\end{equation}
For each fixed $r\in\{0,1,2\}$, consecutive cubic components also strictly
interlace:
\begin{equation}\label{eq:sparse-fixed-r-interlacing}
 0<\xi_{m+1,1}^{(r)}<\xi_{m,1}^{(r)}<\xi_{m+1,2}^{(r)}
 <\cdots<\xi_{m,m}^{(r)}<\xi_{m+1,m+1}^{(r)}<\frac{27}{4}.
\end{equation}
If $c=0$, then $\chi_n(\lambda)=\lambda^n$ and $A_n$ is nilpotent.  If
$c\ne0$, then, with $\omega=e^{2\pi i/3}$, zero has algebraic multiplicity
$r$ in $A_{3m+r}$ and
\begin{equation}\label{eq:sparse-hessenberg-spectrum}
 \operatorname{Spec}(A_{3m+r})\setminus\{0\}
 =\left\{
 (c\xi_{m,k}^{(r)})^{1/3}\omega^j:
 1\le k\le m,\ 0\le j\le2
 \right\},
\end{equation}
where any one choice of the cubic root is used in each triple.  In particular,
for $c>0$ the nonzero spectrum lies on the three rays of arguments
$0,2\pi/3,4\pi/3$, and the radial spectra for the three orders
$3m,3m+1,3m+2$ strictly interlace.  For arbitrary complex $c\ne0$ the same
three-star is rotated by $\arg(c)/3$, and every eigenvalue satisfies
\begin{equation}\label{eq:sparse-hessenberg-radius}
 |\lambda|<\left(\frac{27|c|}{4}\right)^{1/3}.
\end{equation}
Moreover, for each fixed $r$,
\begin{equation}\label{eq:sparse-largest-zero-limit}
 \lim_{m\to\infty}\xi_{m,m}^{(r)}=\frac{27}{4},
\end{equation}
and hence, for fixed $c$,
\begin{equation}\label{eq:sparse-spectral-radius-limit}
 \lim_{m\to\infty}
 \max\{|\lambda|:\lambda\in\operatorname{Spec}(A_{3m+r})\}
 =\left(\frac{27|c|}{4}\right)^{1/3}.
\end{equation}
Thus the radius in \eqref{eq:sparse-hessenberg-radius} is asymptotically
sharp.
\end{corollary}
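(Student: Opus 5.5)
The plan is to work from Proposition~\ref{prop:hessenberg}, then reduce to a cubic substitution and import the known zero results for the threefold-symmetric $2$-orthogonal family.

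\textbf{Recurrence and closed form.} The matrix $\lambda I_n-A_n$ is again a $(2,1)$-banded Toeplitz matrix, with diagonals $a_0=\lambda$, $a_{-1}=0$, $a_{-2}=-y$, $a_1=-x$. Proposition~\ref{prop:hessenberg} with $m=2$ then gives
\[
\chi_n=\lambda\chi_{n-1}-0+a_{-2}a_1^2\chi_{n-3}=\lambda\chi_{n-1}-x^2y\,\chi_{n-3},
\]
which is \eqref{eq:sparse-hessenberg-rec}. The same proposition gives \eqref{eq:sparse-hessenberg-gf}, and the initial values follow from the convention $D_j=0$ for $j<0$. To get \eqref{eq:sparse-hessenberg-explicit}, expand $(1-t(\lambda-ct^2))^{-1}$ and collect the coefficient of $t^n$; the term $\lambda^{n-3j}(-c)^j$ picks up $\binom{n-2j}{j}$.

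\textbf{Cubic decomposition and spectrum.} Setting $n=3m+r$ and factoring out $c^m\lambda^r$ gives \eqref{eq:sparse-cubic-decomposition} directly. The case $c=0$ is immediate. For $c\ne0$, the constant term of $R_m^{(r)}$ is $(-1)^m\binom{m+r}{m}\ne0$. Hence the only zero eigenvalues come from $\lambda^r$, and the nonzero eigenvalues are exactly the cube roots of $c\xi$ with $R_m^{(r)}(\xi)=0$. This gives \eqref{eq:sparse-hessenberg-spectrum}, provided we know that the zeros are simple and positive, since then each $\xi$ contributes three distinct cube roots. The ray and rotation statements follow from this.

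\textbf{Identification with the $2$-orthogonal family.} It remains to treat the zero location. First rescale $\lambda=c^{1/3}\mu$, so that $\chi_n(\lambda)=c^{n/3}P_n(\mu)$ with $P_n=\mu P_{n-1}-P_{n-3}$. This is, up to normalization, the threefold-symmetric $2$-orthogonal Tchebychev recurrence of Douak and Maroni \cite{DouakMaroni1997I}, whose cubic components are $R_m^{(r)}(\mu^3)$. I would then invoke the zero results of \cite{BenRomdhane2008,LoureiroVanAssche2020} for these components. They give that the zeros are real, simple and positive, that the interlacing across $r$ holds as in \eqref{eq:sparse-cubic-interlacing}, that the interlacing in $m$ holds as in \eqref{eq:sparse-fixed-r-interlacing}, and that the zeros lie in the support $(0,27/4)$ of the orthogonality measures. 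The strict bound $\xi<27/4$ immediately yields \eqref{eq:sparse-hessenberg-radius}.

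\textbf{Endpoint limit.} For \eqref{eq:sparse-largest-zero-limit}, I would use the limiting zero distribution \cite{ShapiroStampach2019}, whose support in the cubic variable has right endpoint $27/4$. Because the zeros weakly accumulate on the whole support, the largest zero must approach the endpoint. Alternatively, the discriminant of $1-\mu s+s^3$ vanishes at $\mu^3=27/4$, which locates the branch point. Then \eqref{eq:sparse-spectral-radius-limit} follows.

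\textbf{Main obstacle.} The hard step is matching normalizations exactly with the cited $2$-orthogonal literature, namely the index shifts, the scaling of $u$, and which component carries which $r$. Only once these agree can the interlacing statements be quoted verbatim. If the match proves awkward, I would fall back on a direct argument: a sign-alternation count using the three-term relation among $R_m^{(0)}$, $R_m^{(1)}$, $R_m^{(2)}$ and $R_{m+1}^{(0)}$ inherited from \eqref{eq:sparse-hessenberg-rec}.
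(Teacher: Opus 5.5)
Your proposal is correct and follows the paper's route for almost every step: the recurrence via Proposition~\ref{prop:hessenberg}, coefficient extraction, cubic grouping, the rescaling to the Douak--Maroni recurrence, the citation of \cite{BenRomdhane2008,LoureiroVanAssche2020} for positivity, simplicity and both interlacings, and the limiting-distribution argument for the endpoint. Your check that the constant term $(-1)^m\binom{m+r}{m}$ of $R_m^{(r)}$ is nonzero, which fixes the multiplicity of zero at exactly $r$, is a detail the paper leaves implicit.

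The one genuine difference is the strict bound $\xi_{m,k}^{(r)}<27/4$, and hence \eqref{eq:sparse-hessenberg-radius}. You import it from the literature as ``the zeros lie inside the support of the orthogonality measures''. The paper instead proves it directly on the matrix. With $D_s=\operatorname{diag}(1,s,\ldots,s^{n-1})$, every row sum of $D_s^{-1}|A_n|D_s$ is at most $|x|s+|y|s^{-2}$, and at $s^3=2|y|/|x|$ this equals $(27|c|/4)^{1/3}$. For $n\ge3$ the support graph is irreducible and the first row sum is strictly smaller, so Perron--Frobenius makes the bound strict. The spectral formula \eqref{eq:sparse-hessenberg-spectrum} then converts this into $\xi<27/4$.

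Your route needs two facts you do not establish. First, that the orthogonality measures of this constant-coefficient family are supported on $[0,27/4]$ in the cubic variable. Second, an AT/Nikishin-type theorem placing the zeros of the components strictly inside that interval. The paper uses those references only for positivity and interlacing, so this citation is the least secure link in your plan. Your endpoint limit \eqref{eq:sparse-largest-zero-limit} also needs this upper bound, since weak convergence alone gives only the lower half.

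The paper's argument is elementary and self-contained. It holds for every finite $n$ and arbitrary complex $x,y$, and it supplies exactly the upper bound both routes need.
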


\begin{proof}
Apply Proposition~\ref{prop:hessenberg} to $\lambda I_n-A_n$: its main
diagonal is $\lambda$, its first superdiagonal is $-x$, and its second
subdiagonal is $-y$.  This gives \eqref{eq:sparse-hessenberg-rec}; summation
and coefficient extraction give \eqref{eq:sparse-hessenberg-gf} and
\eqref{eq:sparse-hessenberg-explicit}.  Grouping powers of $\lambda$ modulo
$3$ gives \eqref{eq:sparse-cubic-decomposition}.

After the rescaling $\lambda=c^{1/3}z$, the normalized polynomials satisfy
\[
 C_0=1,\qquad C_1=z,\qquad C_2=z^2,\qquad
 C_{n+1}=zC_n-C_{n-2}.
\]
Up to the standard scalar normalization, this is the constant-coefficient
threefold-symmetric $2$-orthogonal Tchebychev sequence of Douak and Maroni
\cite{DouakMaroni1997I}.  The zero
theorem for threefold-symmetric $2$-orthogonal sequences with positive
recurrence coefficients gives positivity, simplicity, the cross-component
interlacing in \eqref{eq:sparse-cubic-interlacing}, and the successive-component
interlacing in \eqref{eq:sparse-fixed-r-interlacing}; see
\cite{BenRomdhane2008,LoureiroVanAssche2020}.  If $c=0$, the recurrence
\eqref{eq:sparse-hessenberg-rec} immediately gives $\chi_n(\lambda)=\lambda^n$.
For $c\ne0$, Equation \eqref{eq:sparse-hessenberg-spectrum} follows directly
from \eqref{eq:sparse-cubic-decomposition}.

For the finite-size radius bound one may argue directly.  Let
$D_s=\operatorname{diag}(1,s,\ldots,s^{n-1})$, $s>0$.  Every row sum of
$D_s^{-1}|A_n|D_s$ is at most
\[
 |x|s+|y|s^{-2}.
\]
The minimum occurs at $s^3=2|y|/|x|$ and equals
$3|x^2y|^{1/3}/2^{2/3}$.  When $xy\ne0$ and $n\ge3$, the nonnegative support
matrix is irreducible and at least one boundary row has a strict inequality,
so Perron--Frobenius makes the spectral-radius bound strict.  Together with
\eqref{eq:sparse-hessenberg-spectrum}, this gives
$\xi_{m,k}^{(r)}<27/4$ and \eqref{eq:sparse-hessenberg-radius}.  The cases
$n<3$ are immediate.

For the constant-coefficient $2$-orthogonal Tchebychev family, the normalized
zero counting measures of the cubic components have a limiting distribution
supported on $0<t<1$ after the scaling $t=4u/27$, with positive density on
that interval; see \cite{LoureiroVanAssche2020,ShapiroStampach2019}.  Hence
for every $\varepsilon>0$ the interval $(1-\varepsilon,1)$ contains zeros for
all sufficiently large degrees, while the finite-size bound keeps all scaled
zeros below $1$.  This proves \eqref{eq:sparse-largest-zero-limit}; taking
cubic roots in \eqref{eq:sparse-hessenberg-spectrum} gives
\eqref{eq:sparse-spectral-radius-limit}.
\end{proof}

\begin{remark}\label{rem:sparse-hessenberg-literature}
The $2$-orthogonal-polynomial theory is used here only for the zero geometry.
The recurrence, rational generating function, coefficient formula, and cubic
decomposition are direct consequences of the Toeplitz--Hessenberg recurrence.
For positive $x,y$, the threefold rotational geometry is already a special
case of McMillen's double-band theorem \cite{McMillen2009}; the present
formulation identifies the corresponding radii with the zeros of the three
cubic components and extends the algebraic spectral formula to arbitrary
complex $c=x^2y$.  Ben Romdhane \cite{BenRomdhane2008} also studies
eigenvalues of the associated banded Hessenberg matrices.  An explicit limiting density on the normalized
interval $0<t<1$ is known for the constant-coefficient case; see
\cite{ShapiroStampach2019}.  We use only its endpoint consequence
\eqref{eq:sparse-largest-zero-limit}, rather than reproducing the density
formula.  We therefore regard Corollary~\ref{cor:sparse-hessenberg-spectrum}
as a translation of known zero theory into the notation of this Toeplitz
specialization, rather than as a new theory of the underlying polynomials.
\end{remark}

\begin{remark}[Classical $2$-tridiagonal comparison]
\label{rem:sparse-parity-split}
For comparison, let $B_n$ be the $n\times n$ Toeplitz matrix whose only
possibly nonzero diagonals are $b_{-2}=y$ and $b_2=x$.  This is the
$k=2$ instance of the classical permutation decomposition for
$k$-tridiagonal matrices, going back to Egerv{\'a}ry and Sz{\'a}sz
\cite{EgervarySzasz1928}; see also Losonczi \cite{Losonczi1992}, the unified
account of da Fonseca and Y{\i}lmaz \cite{DaFonsecaYilmaz2015}, and the
pentadiagonal discussion in An\v{d}eli'c and da Fonseca \cite{An}.  If
$p=\lceil n/2\rceil$ and $q=\lfloor n/2\rfloor$, simultaneous permutation of
rows and columns according to odd and even indices gives
\[
 B_n\sim C_p\oplus C_q,
 \qquad
 C_s=
 \begin{bmatrix}
  0&x&&\\
  y&0&\ddots&\\
   &\ddots&\ddots&x\\
   &&y&0
 \end{bmatrix}_{s\times s}.
\]
Thus the tridiagonal Lucas formula from Section~\ref{sec:tridiag} gives
\begin{equation}\label{eq:sparse-parity-charpoly}
 \det(\lambda I_n-B_n)
 =U_{p+1}(\lambda,xy)\,U_{q+1}(\lambda,xy).
\end{equation}
Equivalently, with either choice of $\sqrt{xy}$,
\[
 \operatorname{Spec}(B_n)=
 \left\{2\sqrt{xy}\cos\frac{\pi k}{p+1}:1\le k\le p\right\}
 \cup
 \left\{2\sqrt{xy}\cos\frac{\pi k}{q+1}:1\le k\le q\right\},
\]
where the union is a multiset union.  In particular, when $n=2p$ and
$xy\ne0$, every eigenvalue occurs with multiplicity two.  This elementary
parity decomposition is useful for distinguishing the offset-$\pm2$ support
from the genuinely threefold-symmetric $(2,1)$ family above.
\end{remark}

\section{Beyond autonomous open Toeplitz sections}\label{sec:variable}

The constructions above assume both translation-invariant weights and clean
open boundaries.  These assumptions can be relaxed in different ways without
identifying the two ends of the matrix.  We first let the band weights depend
on position, and then isolate fixed finite corner defects as a natural
boundary-state extension.

\subsection{Position-dependent band weights}

Neither Laplace construction fundamentally requires Toeplitz-constant
weights.  What Toeplitz constancy supplies is autonomy: it makes the local
coefficient minors and transfer weights independent of the position.  The
increasing-rows elimination survives for leading principal sections of an
arbitrary fixed infinite banded matrix, while the row-column construction
keeps the same signature graph and replaces one constant transfer matrix by a
position-dependent cocycle.

\begin{theorem}[Nonautonomous increasing-rows relation]\label{thm:variable-increasing}
Let $A_\infty=(\alpha_{ij})_{i,j\ge1}$ be a fixed infinite matrix satisfying
\[
 \alpha_{ij}=0\qquad\text{unless}\qquad -m_1\le j-i\le m_2,
\]
and let $A_n$ be its $n\times n$ leading principal section.  Put
\[
 d=\binom{m_1+m_2}{m_1}.
\]
For all sufficiently large $k$ there are coefficients
$c_0(k),\ldots,c_d(k)$, not all zero, such that
\begin{equation}\label{eq:variable-increasing}
 \sum_{\ell=0}^{d}c_\ell(k)\det A_{k+\ell}=0.
\end{equation}
The same statement holds with determinant replaced by permanent.  Thus both
sequences admit homogeneous nonautonomous annihilating relations involving at
most $d+1$ consecutive terms.
\end{theorem}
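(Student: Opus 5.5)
The plan is to repeat the proof of Theorem~\ref{thm:increasing} essentially verbatim, checking that Toeplitz constancy was used there only to make the coefficient minors independent of $k$. As in Section~\ref{sec:increasing}, assume $m_1\ge m_2$; otherwise pass to the transpose, which is again a fixed banded infinite matrix with the semibandwidths exchanged and has the same leading-section determinants and permanents. Fix $k\ge 2m_1$, large enough that all indices $k-s$ with $s\in\cS$ are positive. For $0\le\ell\le d$, Laplace-expand $\det A_{k+\ell}$ along its last $m_1+\ell$ rows, namely the rows in $R_\ell=\{k+1-m_1,\ldots,k+\ell\}$. The complementary rows are $\{1,\ldots,k-m_1\}$.

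The first step is the band-support observation behind \eqref{eq:incsystem}. Every nonzero complementary minor uses rows $1,\ldots,k-m_1$. Row $k-m_1$ can reach column at most $k-m_1+m_2$, so all columns $>k+m_2-m_1$ lie in the chosen column set. Column $k+m_2-m_1-t$ for small $t$ can be reached only from rows $\ge k+m_2-m_1-t-m_2$. A counting argument forces the complementary column set to be $\{1,\ldots,k+m_2-m_1\}\setminus H_S(k)$ with $H_S(k)=\{k-s_1,\ldots,k-s_{m_2}\}$ and $S\in\cS$. This is a $k-m_1$ subset of the first $k+m_2-m_1$ columns, and the deleted $m_2$ columns must lie in the last $m_1+m_2$ of them, since earlier columns are needed by the rows $\le k-m_1$. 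This uses only the zero pattern $\alpha_{ij}=0$ off the band, so it holds unchanged. We therefore get
\[
 \det A_{k+\ell}=\sum_{S\in\cS}B^{\det}_{\ell,S}(k)\,X_S(k),\qquad 0\le\ell\le d,
\]
where $B^{\det}_{\ell,S}(k)=\det A_{k+\ell}[R_\ell,C_{\ell,S}]$ now genuinely depends on $k$. Likewise $X_S(k)=\pm\det A_\infty[\{1,\ldots,k-m_1\},\{1,\ldots,k+m_2-m_1\}\setminus H_S(k)]$. The key point is that $X_S(k)$ involves only rows $\le k-m_1$ and columns $\le k+m_2-m_1$. Its entries are the same in every ambient section $A_{k+\ell}$, since these are all leading principal sections of the single matrix $A_\infty$. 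Hence $X_S(k)$ is independent of $\ell$.

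The second step is linear algebra. Form the $(d+1)\times d$ matrix $B(k)=(B^{\det}_{\ell,S}(k))$. It has a nonzero left-kernel vector $c(k)=(c_0(k),\ldots,c_d(k))$. Multiplying the system by $c(k)$ gives $\sum_\ell c_\ell(k)\det A_{k+\ell}=c(k)^{\mathsf T}B(k)X(k)=0$, which is \eqref{eq:variable-increasing}. If one wants explicit coefficients, take $c_\ell(k)$ to be the signed maximal minors of $B(k)$ obtained by deleting row $\ell$ (Cramer). In the degenerate case where all these minors vanish, any kernel vector works. For permanents, run the same argument with signless Laplace expansion and signless cofactors.

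The main obstacle is the support claim in the first step: one must check carefully that, for a general band pattern, no complementary column set outside the family indexed by $\cS$ can give a nonzero term. I would verify it through a Hall-type count. The rows $1,\ldots,r$ of the complementary block are supported in columns $\le r+m_2$, and the columns $1,\ldots,c$ are reachable only from rows $\le c+m_1$. Together these force the deleted complementary columns into the window $[k-2m_1+1,\,k+m_2-m_1]$. Because this argument is purely about supports, it transfers verbatim from the Toeplitz case. The phrase ``sufficiently large $k$'' is needed only so that this window lies inside $[1,k]$.
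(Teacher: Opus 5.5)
Your proposal is correct and follows the paper's proof step for step. You transpose to get $m_1\ge m_2$, then Laplace-expand $\det A_{k+\ell}$ along its last $m_1+\ell$ rows. You note that the $d$ complementary cofactors $X_S(k)$ involve only rows $\le k-m_1$ and columns $\le k+m_2-m_1$ of the single matrix $A_\infty$, so they are independent of $\ell$. Finally you take a left-kernel vector of the $(d+1)\times d$ coefficient matrix, using signless expansion for permanents. Your explicit support-window check $[k-2m_1+1,\,k+m_2-m_1]$ and the maximal-minor kernel vector fill in details the paper leaves implicit.
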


\begin{proof}
After transposition if necessary, assume $m_1\ge m_2$.  Repeat the proof of
Theorem~\ref{thm:increasing}, but retain the dependence of the coefficient
minors on the absolute position.  Expanding $A_{k+\ell}$ along its last
$m_1+\ell$ rows, bandedness leaves the same $d$ complementary cofactor
patterns $S\in\cS$.  Because all $A_{k+\ell}$ are leading principal sections
of one fixed infinite matrix, the complementary row set and complementary
column set associated with a given $S$ are independent of $\ell$.  Hence
\[
 \det A_{k+\ell}
   =\sum_{S\in\cS} B^{\det}_{\ell,S}(k)X_S(k),
 \qquad \ell=0,1,\ldots,d,
\]
where the $d$ cofactors $X_S(k)$ are common to all $d+1$ equations; only the
coefficient minors $B^{\det}_{\ell,S}(k)$ now depend on $k$.  The resulting
$(d+1)\times d$ coefficient matrix has a nonzero left-kernel vector
$(c_0(k),\ldots,c_d(k))$, which gives \eqref{eq:variable-increasing}.  The
signless Laplace expansion gives the permanent statement.
\end{proof}

Toeplitz translation invariance is therefore exactly what removes the $k$
dependence from the coefficient matrix in this construction and recovers the
constant-coefficient recurrence of Theorem~\ref{thm:increasing}.  The theorem
also explains why this extension is different from a moving corner-defect
problem: coherence under leading principal truncation preserves the same
complementary cofactors as $\ell$ varies.

For the row-column method, one obtains an equally concrete nonautonomous
transfer.  A convenient position-dependent diagonal model is
\[
 \widetilde A_k=(a_{j-i,\min(i,j)})_{i,j=1}^k,
 \qquad -m_1\le j-i\le m_2,
\]
with all other entries zero.  We write $a_s[t]$ for $a_{s,t}$.
Zakraj\v{s}ek and Petkov\v{s}ek \cite{Zakr} proved recurrence results for
principal minors of arbitrary banded matrices.  Here the point is more
concrete: the \emph{signature graph} of the row-column method is unchanged,
while its transition weights become functions of the position $k$.

For example, in the pentadiagonal case the same six states as in
Section~\ref{sec:rowcolumn} satisfy
\begin{equation}\label{eq:variableP}
 v_{k+1}=Q[k]v_k,
\end{equation}
where
\[
Q[k]=
\begin{bmatrix}
 a_0[k]&-a_{-1}[k-1]&a_{-2}[k-2]&0&0&0\\
 a_1[k]&0&0&-a_2[k-1]&0&0\\
 0&a_1[k]&0&0&-a_2[k-1]&0\\
 a_{-1}[k]&-a_{-2}[k-1]&0&0&0&0\\
 a_0[k+1]&0&0&0&0&-a_{-2}[k-1]\\
 a_2[k]&0&0&0&0&0
\end{bmatrix}.
\]
The permanent system uses the same matrix with the Laplace signs removed.
When the $a_s[t]$ are constant in $t$, \eqref{eq:variableP} reduces to
\eqref{eq:Pdet}.

Iterating \eqref{eq:variableP} gives
\[
 v_{k+j}=Q[k+j-1]\cdots Q[k]v_k.
\]
Taking the first coordinate for $j=0,1,\ldots,6$ produces seven linear forms
in the six state variables.  Eliminating the state vector therefore yields
a scalar variable-coefficient recurrence involving at most seven consecutive
determinants.  The same construction works for general $(m_1,m_2)$ once the
finite signature graph is known, and it applies equally to permanents.

This transfer formulation is preferable to printing the resulting scalar
coefficients: even in the pentadiagonal case those coefficients are lengthy,
whereas the $6\times6$ matrix $Q[k]$ is sparse and transparent.  It also
makes periodic and otherwise structured diagonal dependence accessible by
matrix products.  Finding effective reductions for structured position
dependence beyond the general finite-state construction remains open.

\subsection{Finite corner defects}\label{sec:finite-corner-defects}
Shevelev's survey explicitly records recurrence constructions for sequences
of determinants and permanents of matrices that differ from cyclic or
Toeplitz matrices in only finitely many positions \cite{SheSurvey1992}.
Thus recurrence existence for fixed finite defects is not an open issue here.
What remains interesting from the present constructive viewpoint is a sharp
finite-state description that keeps track of such defects without hiding them
inside a general recurrence-existence theorem.

After fixed row and column permutations, a finite defect scheme may be
collected into northeast and/or southwest corner blocks while leaving a clean
central Toeplitz band, as in the finite-defect literature
\cite{Koch,Kamenetskii2007,RAJK}.  Such preprocessing is not essential to the
row-column construction.  Direct Laplace expansion can instead pass through
the defects themselves.  While a northeast defect is active, expanding a
trailing defective column may delete additional leading rows; dually, a
southwest defect may force additional leading-column deletions.  This suggests
generalized two-sided signatures recording missing rows and columns at both
ends, together with a bounded relative shift of the surviving Toeplitz band
and finite data describing any still-active defect.

Because the defect blocks have fixed size, after a bounded initial phase no
new leading-boundary deletions are created; the trailing boundary then evolves
by the ordinary clean-band row-column rules.  The induced leading-boundary
data and band shift may persist, so one should not assume literal return to an
undeformed principal Toeplitz section, but all such data remain bounded.  This
is the natural finite-state architecture for a direct row-column extension.

The analogous extension of the increasing-rows method is less automatic.
Its clean-band elimination uses the same complementary cofactors for every
number of expanded rows; a corner defect can make those cofactors depend on
the expansion depth or introduce additional defect-containing cofactors that
must themselves be closed recursively.  A useful next problem is therefore
to develop a formal defect-state theory with sharp reachable/observable state
counts, sparse transition structure, minimal scalar recurrence orders, and a
precise comparison of the row-column and increasing-rows extensions.

\section{Cyclic closure and the full subset spectrum}\label{sec:cyclic}

The preceding extension changes the bulk weights.  Cyclic closure is
different: the bulk symbol is unchanged, but wrap-around couplings identify
the two boundaries.  The recurrence theory of circulant permanents has a long
history, beginning with early work of Minc \cite{Minc1964} and continuing
through his recurrence and permanental-compound constructions
\cite{Minc1985,Minc1987}, Shevelev's circulant recurrence program
\cite{SheFund1990,SheSurvey1992}, and the boundary-state transfer of Golin,
Leung, and Wang \cite{GolinLeungWang2006}.  On the determinant side,
Shevelev's canonical circulant representation is organized by associated
matrices of all exterior degrees \cite{SheSurvey1992,She1}.  Fourier
diagonalization gives a short route to the same all-subset spectrum.

Put $w=m_1+m_2$, let
\[
 q(z)=z^{m_1}a(z)=a_{m_2}\prod_{j=1}^w(z-z_j),
\]
and let $\Pi_n$ be the $n\times n$ cyclic shift matrix.  For $n>w$ set
\[
 C_n=a(\Pi_n)=\sum_{s=-m_1}^{m_2}a_s\Pi_n^s.
\]
The restriction $n>w$ merely avoids aliasing of the displayed diagonals.

\begin{proposition}[Cyclic determinant and generic minimality]\label{prop:cyclic}
For $n>w$,
\begin{equation}\label{eq:cyclic-det-product}
 \det C_n=(-1)^{m_2(n+1)}a_{m_2}^{\,n}
          \prod_{j=1}^{w}(1-z_j^n).
\end{equation}
Equivalently,
\begin{equation}\label{eq:cyclic-det-subsets}
 \det C_n=(-1)^{m_2}
 \sum_{J\subseteq[w]}(-1)^{|J|}
 \left(
  (-1)^{m_2}a_{m_2}\prod_{j\in J}z_j
 \right)^n.
\end{equation}
Hence the monic polynomial
\begin{equation}\label{eq:cyclic-annihilator}
 \chi_{\mathrm{cyc}}(t)=
 \prod_{J\subseteq[w]}
 \left(
  t-(-1)^{m_2}a_{m_2}\prod_{j\in J}z_j
 \right)
\end{equation}
annihilates the cyclic determinant sequence and has degree $2^w$.  If the
$2^w$ subset products $\prod_{j\in J}z_j$ are pairwise distinct, then this
degree is minimal.
\end{proposition}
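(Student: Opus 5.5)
Diagonalize $C_n$ by the discrete Fourier transform and evaluate the eigenvalue product. Since $\Pi_n$ is diagonalized by the Fourier matrix with eigenvalues $\omega^k$, $\omega=e^{2\pi i/n}$, $0\le k<n$, and $n>w$ guarantees that $a(\Pi_n)$ is exactly the displayed banded circulant, we have
\[
 \det C_n=\prod_{k=0}^{n-1}a(\omega^k)
 =\prod_{k=0}^{n-1}\omega^{-km_1}q(\omega^k).
\]

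Substituting the factorization $q(z)=a_{m_2}\prod_j(z-z_j)$ turns the product into three factors. The first is $a_{m_2}^n$. The second is $\prod_k\omega^{-km_1}=\omega^{-m_1n(n-1)/2}=(-1)^{m_1(n-1)}$. The third is $\prod_j\prod_k(\omega^k-z_j)$, which we evaluate with $\prod_k(x-\omega^k)=x^n-1$:
\[
 \prod_k(\omega^k-z_j)=(-1)^n(z_j^n-1)=(-1)^{n+1}(1-z_j^n).
\]
Collecting signs gives $(-1)^{m_1(n-1)+w(n+1)}=(-1)^{m_2(n+1)}$, using $m_1(n-1)\equiv m_1(n+1)\pmod 2$. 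This is \eqref{eq:cyclic-det-product}.

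For \eqref{eq:cyclic-det-subsets}, expand $\prod_j(1-z_j^n)=\sum_J(-1)^{|J|}\prod_{j\in J}z_j^n$. Then absorb $(-1)^{m_2 n}a_{m_2}^n$ into each $n$th power, leaving the constant prefactor $(-1)^{m_2}$. The right-hand side is now an exponential polynomial $\sum_J c_J\lambda_J^n$ with bases $\lambda_J=(-1)^{m_2}a_{m_2}\prod_{j\in J}z_j$. Hence $\chi_{\mathrm{cyc}}(t)=\prod_J(t-\lambda_J)$ annihilates the sequence: applying $\chi_{\mathrm{cyc}}$ as a shift operator kills each $\lambda_J^n$.

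One caveat is that the identity is only claimed for $n>w$. The annihilation statement should therefore be read for the sequence indexed by $n>w$, or equivalently for its exponential-polynomial extension to all $n$. I will state this explicitly.

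The minimality step is the one needing care, though it is standard. Assume the $2^w$ subset products are pairwise distinct. Since $a_{m_2}\ne0$, the $\lambda_J$ are then pairwise distinct, and every coefficient $c_J=(-1)^{m_2+|J|}$ is nonzero. Suppose some recurrence of order $r<2^w$ annihilated the values for $n>w$, with characteristic polynomial $g$. Then $g(\text{shift})$ applied to the sequence would give $\sum_J c_J g(\lambda_J)\lambda_J^{n}=0$ for all large $n$.

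The distinct-base terms $\lambda_J^n$ are linearly independent as sequences on any window of $2^w$ consecutive indices, by a Vandermonde argument. One subtlety is a possible base $\lambda_J=0$, which occurs only when some $z_j=0$. This is excluded because $a_{-m_1}\ne0$, so $q(0)\ne0$ and every $\lambda_J\ne0$. Linear independence then forces $g(\lambda_J)=0$ for all $2^w$ distinct values, which is impossible for $\deg g<2^w$. Hence the order $2^w$ is minimal under the stated hypothesis.
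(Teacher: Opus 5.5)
Your proof is correct and follows essentially the same route as the paper: Fourier diagonalization of $\Pi_n$, the root-of-unity product identities with the same sign bookkeeping (your $\omega^{-m_1n(n-1)/2}=(-1)^{m_1(n-1)}$ is the paper's $\prod_{\zeta^n=1}\zeta^{-m_1}$), expansion over subsets, and a Vandermonde argument for minimality. Your explicit check that no base $\lambda_J$ vanishes, via $a_{-m_1}\ne0$, is a detail the paper leaves implicit, and it does not change the argument.
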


\begin{proof}
The eigenvalues of $\Pi_n$ are the $n$th roots of unity.  Thus
\[
 \det C_n=\prod_{\zeta^n=1}a(\zeta)
 =a_{m_2}^{\,n}
   \prod_{\zeta^n=1}\zeta^{-m_1}
   \prod_{j=1}^w\prod_{\zeta^n=1}(\zeta-z_j).
\]
Since
\[
 \prod_{\zeta^n=1}\zeta=(-1)^{n+1},
 \qquad
 \prod_{\zeta^n=1}(\zeta-z)=(-1)^{n+1}(1-z^n),
\]
and $w=m_1+m_2$, the total sign is $(-1)^{m_2(n+1)}$, which proves
\eqref{eq:cyclic-det-product}.  Expanding the product gives
\eqref{eq:cyclic-det-subsets}, hence \eqref{eq:cyclic-annihilator} is an
annihilator.  When the subset products are pairwise distinct, the right side
of \eqref{eq:cyclic-det-subsets} is a linear combination of $2^w$ distinct
exponential sequences with nonzero coefficients.  Their Vandermonde matrix
is nonsingular, so no annihilating polynomial of smaller degree exists.
\end{proof}

\begin{remark}[Comparison with Widom's formula]\label{rem:cyclic-exterior}
The characteristic bases in Proposition~\ref{prop:cyclic} are the same root
products
\[
 W_J=(-1)^{m_2}a_{m_2}\prod_{j\in J}z_j
\]
that occur in Widom's formula \eqref{eq:widomform}.  What changes is the
admissible family of subsets.  For open Toeplitz sections only the layer
$|J|=m_2$ occurs, giving generically
$\binom{w}{m_2}$ modes; cyclic closure uses every $J\subseteq[w]$ and hence
all $2^w$ modes.  In exterior-algebra language this is the passage
\[
 \bigwedge^{m_2}V
 \qquad\longrightarrow\qquad
 \bigoplus_{r=0}^w\bigwedge^rV
 =\bigwedge\nolimits^\bullet V,
 \qquad
 \sum_{r=0}^w\binom wr=2^w.
\]
This is the all-associated-degree organization appearing in Shevelev's
circulant representation \cite{SheSurvey1992}; Proposition~\ref{prop:cyclic}
adds that the full subset count is generically minimal for determinants.
\end{remark}

For permanents there is no analogous Fourier product factorization in
general.  Standard cut-open cycle-cover constructions nevertheless give a
finite-state transfer for each fixed jump set; see
\cite{GolinLeungWang2006}.  Their boundary states retain the partial
in-degree and out-degree data that survive across the cut, and the resulting
transfer can be much larger than the minimal scalar recurrence.  Indeed,
substantial characteristic-polynomial reductions are a central feature of
that construction.  Thus the determinant's exact all-subset count $2^w$
should not be read as a corresponding generic minimality statement for
cyclic permanents.  Minc's recurrence constructions
\cite{Minc1985,Minc1987} and their transfer interpretation in
\cite{GolinLeungWang2006} provide important examples of such reductions.
Sparse circulants also admit complementary determinantal reductions in
special cases \cite{CodenottiResta2002}.

The open and cyclic determinant families thus provide a particularly clean
comparison.  The same symbol-root products govern both, but open boundary
conditions select one exterior degree while cyclic closure collects all
exterior degrees.  The two generic mode counts are respectively
$\binom{m_1+m_2}{m_1}$ and $2^{m_1+m_2}$.

\section{Concluding remarks}\label{sec:conclusion}

The two Laplace constructions give complementary realizations of the same
binomial-scale recurrence mechanism.  The increasing-rows method gives a
direct cofactor-elimination argument, while the row-column method produces a
sparse transfer with exactly
\[
 \binom{m_1}{j}\binom{m_2}{j}
\]
states at level $j$ and total dimension
$\binom{m_1+m_2}{m_1}$.  This exact state classification also gives the
transition sparsity formula and explains the central binomial scale in the
balanced case.

For determinants, the two constructions connect to the classical compound
companion representation.  The increasing-rows cofactors are compound
coordinates, and the row-column transfer has the Widom characteristic
polynomial and is generically similar to the same compound transfer.  Hence
the binomial recurrence order is generically minimal for unrestricted
fixed-band Toeplitz determinants.  For permanents, the same open-boundary
state graph gives the binomial upper bound, but minimality remains a separate
problem and may depend strongly on the support and weights.

The extensions considered here preserve this distinction.  Position-dependent
band weights retain the finite signature graph but replace the autonomous
transfer by a cocycle.  Cyclic closure changes the boundary condition and,
for determinants, replaces one fixed exterior degree by the full exterior
algebra, increasing the generic mode count to $2^{m_1+m_2}$.  The sparse
Toeplitz--Hessenberg example illustrates a complementary use of the scalar
recurrence: after cubic decomposition it organizes the finite-section
spectrum through a threefold-symmetric $2$-orthogonal polynomial family.

Natural next problems are to characterize systematic reductions of permanent
transfers, to scalarize structured nonautonomous cocycles efficiently, and to
construct minimal scalar recurrences without first forming a full
characteristic polynomial.  Fixed finite corner defects provide a concrete
test case: recurrence existence is known, but sharp augmented state spaces
and observable quotients remain to be determined.

\section*{Code availability}
Implementations of the recurrence constructions in Mathematica and SageMath
are available at \url{https://github.com/maxale/matrix_codes}.

\section*{Declaration of generative AI and AI-assisted technologies in the manuscript preparation process}
During the preparation of this work, the authors used ChatGPT (OpenAI) to
assist with literature searches, manuscript organization, and language and
editorial refinement.  After using this tool, the authors reviewed and edited
the content as needed and take full responsibility for the content of the
publication.

\printbibliography

@article{Sweet,
  author = {Sweet, R. A.},
  title = {A recursive relation for the determinant of a pentadiagonal matrix},
  journal = {Communications of the ACM},
  volume = {12},
  number = {6},
  pages = {330--332},
  year = {1969},
  doi = {10.1145/363011.363152}
}

@article{Cinkir,
  author = {Cinkir, Z.},
  title = {An elementary algorithm for computing the determinant of pentadiagonal {Toeplitz} matrices},
  journal = {Journal of Computational and Applied Mathematics},
  volume = {236},
  number = {9},
  pages = {2298--2305},
  year = {2012},
  doi = {10.1016/j.cam.2011.11.017}
}

@article{Jia,
  author = {Jia, J. and Yang, B. and Li, S.},
  title = {On a homogeneous recurrence relation for the determinants of general pentadiagonal {Toeplitz} matrices},
  journal = {Computers \& Mathematics with Applications},
  volume = {71},
  number = {4},
  pages = {1036--1044},
  year = {2016},
  doi = {10.1016/j.camwa.2016.01.027}
}

@article{An,
  author = {An{\dj}eli\'c, M. and da Fonseca, C. M.},
  title = {Some determinantal considerations for pentadiagonal matrices},
  journal = {Linear and Multilinear Algebra},
  volume = {69},
  number = {16},
  pages = {3121--3129},
  year = {2021},
  doi = {10.1080/03081087.2019.1708845}
}

@article{DuFonsecaPereira2022,
  author = {Du, Z. and da Fonseca, C. M. and Pereira, A.},
  title = {On determinantal recurrence relations of banded matrices},
  journal = {Kuwait Journal of Science},
  volume = {49},
  number = {1},
  pages = {1--9},
  year = {2022},
  doi = {10.48129/kjs.v49i1.11165}
}

@article{Widom1958,
  author = {Widom, H.},
  title = {On the eigenvalues of certain {Hermitian} operators},
  journal = {Transactions of the American Mathematical Society},
  volume = {88},
  number = {2},
  pages = {491--522},
  year = {1958},
  doi = {10.1090/S0002-9947-1958-0098321-8}
}

@article{SheFund1990,
  author = {Shevelev, V. S.},
  title = {The fundamental theorem on sequences of permanents of circulants generated by $k$-dimensional arithmetical vectors and its applications},
  journal = {Doklady Akademii Nauk Ukrainskoi SSR, Series A},
  number = {9},
  year = {1990}
}

@article{She,
  author = {Shevelev, V. S.},
  title = {Recurrence formulas for permanents and determinants of {Toeplitz} matrices},
  journal = {Doklady Akademii Nauk Ukrainskoi SSR},
  number = {10},
  pages = {32--36},
  year = {1991}
}

@article{She1,
  author = {Shevelev, V. S.},
  title = {Modern enumeration theory of permutations with restricted positions},
  journal = {Discrete Mathematics and Applications},
  volume = {3},
  number = {3},
  pages = {229--264},
  year = {1993},
  doi = {10.1515/dma.1993.3.3.229}
}

@article{SheSurvey1992,
  author = {Shevelev, V. S.},
  title = {Some problems of the theory of enumerating the permutations with restricted positions},
  journal = {Journal of Soviet Mathematics},
  volume = {61},
  number = {4},
  pages = {2272--2317},
  year = {1992},
  doi = {10.1007/BF01104103}
}

@article{Zakr,
  author = {Zakraj\v{s}ek, H. and Petkov\v{s}ek, M.},
  title = {{Pascal}-like determinants are recursive},
  journal = {Advances in Applied Mathematics},
  volume = {33},
  number = {3},
  pages = {431--450},
  year = {2004},
  doi = {10.1016/j.aam.2003.09.004}
}

@article{Alexandersson2012,
  author = {Alexandersson, P.},
  title = {{Schur} polynomials, banded {Toeplitz} matrices and {Widom}'s formula},
  journal = {The Electronic Journal of Combinatorics},
  volume = {19},
  number = {4},
  pages = {P22},
  year = {2012},
  doi = {10.37236/2651}
}

@article{Getu1991,
  author = {Getu, S.},
  title = {Evaluating determinants via generating functions},
  journal = {Mathematics Magazine},
  volume = {64},
  number = {1},
  pages = {45--53},
  year = {1991},
  doi = {10.1080/0025570X.1991.11977573}
}

@article{Cod,
  author = {Codenotti, B. and Crespi, V. and Resta, G.},
  title = {On the permanent of certain (0,1) {Toeplitz} matrices},
  journal = {Linear Algebra and its Applications},
  volume = {267},
  pages = {65--100},
  year = {1997},
  doi = {10.1016/S0024-3795(97)80043-2}
}

@article{Schwartz2009,
  author = {Schwartz, M.},
  title = {Efficiently computing the permanent and hafnian of some banded {Toeplitz} matrices},
  journal = {Linear Algebra and its Applications},
  volume = {430},
  number = {4},
  pages = {1364--1374},
  year = {2009},
  doi = {10.1016/j.laa.2008.10.029}
}

@article{Koch,
  author = {Kocharovsky, V. V. and Kocharovsky, V. V. and Martyanov, V. V. and Tarasov, S. V.},
  title = {Exact recursive calculation of circulant permanents: A band of different diagonals inside a uniform matrix},
  journal = {Entropy},
  volume = {23},
  number = {11},
  pages = {1423},
  year = {2021},
  doi = {10.3390/e23111423}
}

@article{Kamenetskii2007,
  author = {Kamenetskii, A. M.},
  title = {Rationality of generating functions of cycle rook polynomials and cycle permanents of {Toeplitz} matrices and circulants},
  journal = {Russian Mathematical Surveys},
  volume = {62},
  number = {6},
  pages = {1207--1209},
  year = {2007},
  doi = {10.1070/RM2007v062n06ABEH004487}
}

@inproceedings{GolinLeungWang2006,
  author = {Golin, Mordecai J. and Leung, Yiu Cho and Wang, Yajun},
  title = {Permanents of Circulants: A Transfer Matrix Approach},
  booktitle = {Proceedings of the 8th Workshop on Algorithm Engineering and Experiments and the 3rd Workshop on Analytic Algorithms and Combinatorics},
  pages = {263--272},
  publisher = {Society for Industrial and Applied Mathematics},
  year = {2006},
  doi = {10.1137/1.9781611972962.11},
  eprint = {0708.0907},
  archivePrefix = {arXiv},
  primaryClass = {math.CO}
}

@misc{Hendel2026,
  author = {Hendel, R. J.},
  title = {Proof of convergence of a {Laplace} expansion algorithm for calculating recursions satisfied by a family of determinants},
  year = {2026},
  eprint = {2601.04454},
  archivePrefix = {arXiv},
  primaryClass = {math.CO},
  doi = {10.48550/arXiv.2601.04454},
  note = {arXiv preprint, version 2}
}

@article{NoschesePasquiniReichel2013,
  author = {Noschese, S. and Pasquini, L. and Reichel, L.},
  title = {Tridiagonal {Toeplitz} matrices: Properties and novel applications},
  journal = {Numerical Linear Algebra with Applications},
  volume = {20},
  number = {2},
  pages = {302--326},
  year = {2013},
  doi = {10.1002/nla.1811}
}

@book{ZimmermannEtAl2018,
  author = {Zimmermann, P. and Casamayou, A. and Cohen, N. and Connan, G. and Dumont, T. and others},
  title = {Computational Mathematics with {SageMath}},
  publisher = {SIAM},
  address = {Philadelphia},
  year = {2018},
  isbn = {978-1-61197-545-1},
  doi = {10.1137/1.9781611975468}
}

@article{Fid,
  author = {Fiduccia, C. M.},
  title = {An efficient formula for linear recurrences},
  journal = {SIAM Journal on Computing},
  volume = {14},
  number = {1},
  pages = {106--112},
  year = {1985},
  doi = {10.1137/0214007}
}

@article{Kho,
  author = {Khomovsky, D. I.},
  title = {Efficient computation of terms of linear recurrence sequences of any order},
  journal = {Integers},
  volume = {18},
  pages = {A39},
  year = {2018}
}

@inproceedings{BostanMori2021,
  author = {Bostan, A. and Mori, R.},
  title = {A simple and fast algorithm for computing the n-th term of a linearly recurrent sequence},
  booktitle = {Proceedings of the 2021 Symposium on Simplicity in Algorithms (SOSA)},
  pages = {118--132},
  year = {2021},
  doi = {10.1137/1.9781611976496.14}
}

@article{BiniCapovani1983,
  author = {Bini, D. and Capovani, M.},
  title = {Spectral and computational properties of band symmetric {Toeplitz} matrices},
  journal = {Linear Algebra and its Applications},
  volume = {52--53},
  pages = {99--126},
  year = {1983},
  doi = {10.1016/0024-3795(83)80009-3}
}

@article{Trench1985,
  author = {Trench, W. F.},
  title = {On the eigenvalue problem for {Toeplitz} band matrices},
  journal = {Linear Algebra and its Applications},
  volume = {64},
  pages = {199--214},
  year = {1985},
  doi = {10.1016/0024-3795(85)90277-0}
}

@article{Tismenetsky1987,
  author = {Tismenetsky, M.},
  title = {Determinant of block-{Toeplitz} band matrices},
  journal = {Linear Algebra and its Applications},
  volume = {85},
  pages = {165--184},
  year = {1987},
  doi = {10.1016/0024-3795(87)90214-X}
}

@article{DouakMaroni1997I,
  author = {Douak, K. and Maroni, P.},
  title = {On $d$-orthogonal {Tchebychev} polynomials, {I}},
  journal = {Applied Numerical Mathematics},
  volume = {24},
  number = {1},
  pages = {23--53},
  year = {1997},
  doi = {10.1016/S0168-9274(97)00006-8}
}

@article{BenRomdhane2008,
  author = {Ben Romdhane, N.},
  title = {On the zeros of $d$-symmetric $d$-orthogonal polynomials},
  journal = {Journal of Mathematical Analysis and Applications},
  volume = {344},
  number = {2},
  pages = {888--897},
  year = {2008},
  doi = {10.1016/j.jmaa.2008.02.038}
}

@article{LoureiroVanAssche2020,
  author = {Loureiro, A. F. and Van Assche, W.},
  title = {Threefold symmetric {Hahn}-classical multiple orthogonal polynomials},
  journal = {Analysis and Applications},
  volume = {18},
  number = {2},
  pages = {271--332},
  year = {2020},
  doi = {10.1142/S0219530519500106}
}

@article{Minc1985,
  author = {Minc, Henryk},
  title = {Recurrence formulas for permanents of $(0,1)$-circulants},
  journal = {Linear Algebra and its Applications},
  volume = {71},
  pages = {241--265},
  year = {1985},
  doi = {10.1016/0024-3795(85)90250-2}
}

@article{Minc1987,
  author = {Minc, Henryk},
  title = {Permanental compounds and permanents of $(0,1)$-circulants},
  journal = {Linear Algebra and its Applications},
  volume = {86},
  pages = {11--42},
  year = {1987},
  doi = {10.1016/0024-3795(87)90285-0}
}

@book{BottcherGrudsky2005,
  author = {B\"ottcher, Albrecht and Grudsky, Sergei M.},
  title = {Spectral Properties of Banded {Toeplitz} Matrices},
  publisher = {Society for Industrial and Applied Mathematics},
  address = {Philadelphia},
  year = {2005},
  isbn = {978-0-89871-599-6},
  doi = {10.1137/1.9780898717853}
}

@article{Minc1964,
  author = {Minc, Henryk},
  title = {Permanents of $(0,1)$-circulants},
  journal = {Canadian Mathematical Bulletin},
  volume = {7},
  number = {2},
  pages = {253--263},
  year = {1964},
  doi = {10.4153/CMB-1964-023-3}
}

@article{CodenottiResta2002,
  author = {Codenotti, B. and Resta, G.},
  title = {Computation of sparse circulant permanents via determinants},
  journal = {Linear Algebra and its Applications},
  volume = {355},
  pages = {15--34},
  year = {2002},
  doi = {10.1016/S0024-3795(02)00330-0}
}

@article{RAJK,
  author = {Rajkovi\'c, P. and Radovi\'c, L. and Rajkovi\'c, J.},
  title = {Almost multi-diagonal determinants},
  journal = {Kragujevac Journal of Mathematics},
  volume = {47},
  number = {7},
  pages = {1047--1056},
  year = {2023},
  doi = {10.46793/KgJMat2307.1047R}
}

@article{ShapiroStampach2019,
  author = {Shapiro, Boris and {\v S}tampach, Franti{\v s}ek},
  title = {Non-self-adjoint {Toeplitz} matrices whose principal submatrices have real spectrum},
  journal = {Constructive Approximation},
  volume = {49},
  pages = {191--226},
  year = {2019},
  doi = {10.1007/s00365-017-9408-0}
}

@article{MarrVineyard1988,
  author = {Marr, Robert B. and Vineyard, George H.},
  title = {Five-diagonal {Toeplitz} determinants and their relation to {Chebyshev} polynomials},
  journal = {SIAM Journal on Matrix Analysis and Applications},
  volume = {9},
  number = {4},
  pages = {579--586},
  year = {1988},
  doi = {10.1137/0609048}
}

@article{HadjElouafi2008,
  author = {Hadj, A. D. A. and Elouafi, M.},
  title = {On the characteristic polynomial, eigenvectors and determinant of a pentadiagonal matrix},
  journal = {Applied Mathematics and Computation},
  volume = {198},
  number = {2},
  pages = {634--642},
  year = {2008},
  doi = {10.1016/j.amc.2007.09.005}
}

@article{McMillen2009,
  author = {McMillen, Tyler},
  title = {On the eigenvalues of double band matrices},
  journal = {Linear Algebra and its Applications},
  volume = {431},
  number = {10},
  pages = {1890--1897},
  year = {2009},
  doi = {10.1016/j.laa.2009.06.026}
}

@article{EgervarySzasz1928,
  author = {Egerv{\'a}ry, E. and Sz{\'a}sz, O.},
  title = {Einige Extremalprobleme im Bereiche der trigonometrischen Polynome},
  journal = {Mathematische Zeitschrift},
  volume = {27},
  pages = {641--652},
  year = {1928},
  doi = {10.1007/BF01171120}
}

@article{Losonczi1992,
  author = {Losonczi, L.},
  title = {Eigenvalues and eigenvectors of some tridiagonal matrices},
  journal = {Acta Mathematica Hungarica},
  volume = {60},
  number = {3--4},
  pages = {309--322},
  year = {1992},
  doi = {10.1007/BF00051649}
}

@article{DaFonsecaYilmaz2015,
  author = {da Fonseca, Carlos M. and Y{\i}lmaz, Fatih},
  title = {Some comments on $k$-tridiagonal matrices: determinant, spectra, and inversion},
  journal = {Applied Mathematics and Computation},
  volume = {270},
  pages = {644--647},
  year = {2015},
  doi = {10.1016/j.amc.2015.08.088}
}

\clearpage
\appendix
\section{Balanced \texorpdfstring{$(3,3)$}{(3,3)} row-column transfer matrix}\label{app:33}

This appendix records one larger instance of the row-column construction.
It makes the state ordering behind the sparse transfer matrix explicit and
provides a concrete $20$-state example of the general state classification.
Theorem~\ref{thm:states} explains a priori why there are $\binom63=20$
states and $50$ nonzero transition entries.

Let $\sigma_1,\ldots,\sigma_{20}$ be the signatures, in the order
\begin{align*}
&\varnothing,
\begin{bmatrix}1\\2\end{bmatrix},
\begin{bmatrix}1\\3\end{bmatrix},
\begin{bmatrix}1\\4\end{bmatrix},
\begin{bmatrix}2\\1\end{bmatrix},
\begin{bmatrix}3\\1\end{bmatrix},
\begin{bmatrix}2\\2\end{bmatrix},
\begin{bmatrix}3\\2\end{bmatrix},
\begin{bmatrix}2\\3\end{bmatrix},
\begin{bmatrix}3\\3\end{bmatrix},\\
&\begin{bmatrix}1,2\\2,3\end{bmatrix},
\begin{bmatrix}1,2\\2,4\end{bmatrix},
\begin{bmatrix}1,3\\2,3\end{bmatrix},
\begin{bmatrix}1,3\\2,4\end{bmatrix},
\begin{bmatrix}2,3\\1,3\end{bmatrix},
\begin{bmatrix}2,4\\1,3\end{bmatrix},
\begin{bmatrix}1,3\\3,4\end{bmatrix},
\begin{bmatrix}2,3\\1,2\end{bmatrix},
\begin{bmatrix}2,3\\2,3\end{bmatrix},
\begin{bmatrix}1,2,3\\2,3,4\end{bmatrix}.
\end{align*}
Thus, with
\[
 v_k=\bigl(M_{\sigma_1}(k),\ldots,M_{\sigma_{20}}(k)\bigr)^{\mathsf T},
\]
Algorithm~\ref{alg:rowcolumn} gives
\[
 v_{k+1}=Q_{3,3}v_k,
\]
where the following matrix has exactly $50$ nonzero entries.

\begingroup
\setcounter{MaxMatrixCols}{20}
\setlength{\arraycolsep}{2.2pt}
\renewcommand{\arraystretch}{1.05}
\[
\resizebox{\textwidth}{!}{$
Q_{3,3}=\begin{bmatrix}
a_0 & -a_{-1} & a_{-2} & -a_{-3} & 0 & 0 & 0 & 0 & 0 & 0 & 0 & 0 & 0 & 0 & 0 & 0 & 0 & 0 & 0 & 0 \\
a_1 & 0 & 0 & 0 & -a_2 & a_3 & 0 & 0 & 0 & 0 & 0 & 0 & 0 & 0 & 0 & 0 & 0 & 0 & 0 & 0 \\
0 & a_1 & 0 & 0 & 0 & 0 & -a_2 & a_3 & 0 & 0 & 0 & 0 & 0 & 0 & 0 & 0 & 0 & 0 & 0 & 0 \\
0 & 0 & a_1 & 0 & 0 & 0 & 0 & 0 & -a_2 & a_3 & 0 & 0 & 0 & 0 & 0 & 0 & 0 & 0 & 0 & 0 \\
a_{-1} & -a_{-2} & a_{-3} & 0 & 0 & 0 & 0 & 0 & 0 & 0 & 0 & 0 & 0 & 0 & 0 & 0 & 0 & 0 & 0 & 0 \\
0 & 0 & 0 & 0 & a_{-1} & 0 & -a_{-2} & 0 & a_{-3} & 0 & 0 & 0 & 0 & 0 & 0 & 0 & 0 & 0 & 0 & 0 \\
a_0 & 0 & 0 & 0 & 0 & 0 & 0 & 0 & 0 & 0 & -a_{-2} & a_{-3} & 0 & 0 & 0 & 0 & 0 & 0 & 0 & 0 \\
0 & 0 & 0 & 0 & a_0 & 0 & 0 & 0 & 0 & 0 & 0 & 0 & -a_{-2} & a_{-3} & 0 & 0 & 0 & 0 & 0 & 0 \\
0 & a_0 & 0 & 0 & 0 & 0 & 0 & 0 & 0 & 0 & 0 & 0 & 0 & 0 & -a_2 & a_3 & 0 & 0 & 0 & 0 \\
0 & 0 & 0 & 0 & 0 & 0 & a_0 & 0 & 0 & 0 & 0 & 0 & -a_{-1} & 0 & 0 & 0 & a_{-3} & 0 & 0 & 0 \\
a_2 & 0 & 0 & 0 & -a_3 & 0 & 0 & 0 & 0 & 0 & 0 & 0 & 0 & 0 & 0 & 0 & 0 & 0 & 0 & 0 \\
0 & a_2 & 0 & 0 & 0 & 0 & -a_3 & 0 & 0 & 0 & 0 & 0 & 0 & 0 & 0 & 0 & 0 & 0 & 0 & 0 \\
a_1 & 0 & 0 & 0 & 0 & 0 & 0 & 0 & 0 & 0 & 0 & 0 & 0 & 0 & 0 & 0 & 0 & -a_3 & 0 & 0 \\
0 & a_1 & 0 & 0 & 0 & 0 & 0 & 0 & 0 & 0 & 0 & 0 & 0 & 0 & -a_3 & 0 & 0 & 0 & 0 & 0 \\
a_{-1} & 0 & 0 & 0 & 0 & 0 & 0 & 0 & 0 & 0 & -a_{-3} & 0 & 0 & 0 & 0 & 0 & 0 & 0 & 0 & 0 \\
0 & 0 & 0 & 0 & a_{-1} & 0 & 0 & 0 & 0 & 0 & 0 & 0 & -a_{-3} & 0 & 0 & 0 & 0 & 0 & 0 & 0 \\
0 & 0 & 0 & 0 & 0 & 0 & 0 & 0 & 0 & 0 & a_1 & 0 & 0 & 0 & 0 & 0 & 0 & 0 & -a_3 & 0 \\
a_{-2} & -a_{-3} & 0 & 0 & 0 & 0 & 0 & 0 & 0 & 0 & 0 & 0 & 0 & 0 & 0 & 0 & 0 & 0 & 0 & 0 \\
a_0 & 0 & 0 & 0 & 0 & 0 & 0 & 0 & 0 & 0 & 0 & 0 & 0 & 0 & 0 & 0 & 0 & 0 & 0 & -a_{-3} \\
a_3 & 0 & 0 & 0 & 0 & 0 & 0 & 0 & 0 & 0 & 0 & 0 & 0 & 0 & 0 & 0 & 0 & 0 & 0 & 0
\end{bmatrix}.
$}
\]
\endgroup

Let
\[
 \det(tI-Q_{3,3})=t^{20}-r_1t^{19}-r_2t^{18}-\cdots-r_{20}.
\]
Then Cayley--Hamilton gives the order-$20$ recurrence
\begin{equation}\label{eq:app33-rec}
 D_k(3,3)=\sum_{i=1}^{20}r_iD_{k-i}(3,3).
\end{equation}

\end{document}